\newtheorem{example}{Example}
\newtheorem{lem}{Lemma}[section]
\newtheorem{cor}[lem]{Corollary}
\newtheorem{thm}[lem]{Theorem}
\newtheorem{definition}[lem]{Definition}
\newtheorem{remark}{Remark}
\begin{document}
\title{The normalized Laplacian spectra of subdivision vertex-edge neighbourhood  vertex(edge)-corona for graphs\footnote{This work is supported by the Young Scholars Science Foundation of Lanzhou Jiaotong  University (No.2016014) and NSFC (No.11461038).}}

\author{{\small Fei Wen$^{1,2}$, \ \ You Zhang$^{2}$, \ \ Wei Wang$^{1,}$\footnote{E-mail addresses: wang\_weiw@163.com(W. Wang), wenfei@mail.lzjtu.cn(F. Wen).}}\\[2mm]
\scriptsize 1. School of Mathematics and Statistics, Xi'an Jiaotong University, Xi'an 710049, P.R. China\\
\scriptsize 2. Institute of Applied Mathematics,
Lanzhou Jiaotong University, Lanzhou 730070, P.R.China}
\date{}
\maketitle
{\flushleft\large\bf Abstract.} In this paper, we introduce two new
graph operations, namely, \emph{the subdivision vertex-edge neighbourhood
vertex-corona} and \emph{the subdivision vertex-edge neighbourhood
edge-corona} on graphs $G_1$, $G_2$ and $G_3$, and the resulting graphs are denoted by $G_1^S\bowtie (G_2^V\cup G_3^E)$ and
$G_1^S\diamondsuit(G_2^V\cup G_3^E)$, respectively. Whereafter, the normalized Laplacian spectra of $G_1^S\bowtie (G_2^V\cup G_3^E)$ and $G_1^S\diamondsuit(G_2^V\cup G_3^E)$ are respectively determined in terms of the corresponding normalized Laplacian spectra of the connected regular graphs $G_{1}$, $G_{2}$ and $G_{3}$, which extend the corresponding results of [A. Das, P. Panigrahi, \emph{Linear Multil. Algebra}, 2017, 65(5): 962-972]. As applications, these results enable us to construct infinitely many pairs of \emph{normalized Laplacian cospectral graphs}. Moreover, we also give the number of the \emph{spanning trees}, the \emph{multiplicative degree-Kirchhoff index} and \emph{Kemeny's constant} of $G_1^S\bowtie (G_2^V\cup G_3^E)$ (resp. $G_1^S\diamondsuit(G_2^V\cup G_3^E)$).

{\flushleft\large\bf Keywords}: subdivision
vertex-edge neighbourhood vertex-corona, subdivision vertex-edge
neighbourhood edge-corona, normalized Laplacian spectrum, cospectral graphs

\vspace{1mm} \maketitle {\flushleft\large\bf AMS Classifications}:
05C50

\section{Introduction}

Throughout this paper, we are concerned only with simple undirected
graphs (loops and multiple edges are not allowed). Let $G$ be a
graph with vertex set $V(G)=\{v_1,v_2,\ldots,v_n\}$ and edge set
$E(G)=\{e_1,e_2,\ldots,e_m\}$ where $|V(G)|=n$, $|E(G)|=m$. The \emph{line graph} $\ell(G)$ of graph $G$ is a graph whose
vertices corresponding the edges of $G$, and where two vertices are
adjacent iff the corresponding edges of $G$ are adjacent. We denote the \emph{complete graph} and the \emph{cycle} of order $n$ by $K_{n}$ and $C_{n}(n\ge3)$, respectively. A
\emph{graph matrix} $M=M(G)$ is defined to be a symmetric matrix
with respect to adjacency matrix $A(G)$ of $G$. The
$M$-characteristic polynomial of $G$ is defined as
$\Phi_{M}(x)=\det(xI-M)$, where $I$ is the identity matrix. The
$M$-eigenvalues of $G$ are the eigenvalues of its $M$-polynomial.
The $M$-spectrum, denoted by $Spec_{M}(G)$, of $G$ is a multiset
consisting of the $M$-eigenvalues. And two graphs $G$ and $H$ are $M$--cospectral if $\Phi_{M(G)}(x)=\Phi_{M(H)}(x)$.

Let $D(G)=diag(d_{v_1},d_{v_2},\ldots,d_{v_n})$ be the degree diagonal matrix of $G$. The graph matrix
$M=M(G)$ is respectively called the \emph{adjacency matrix},
the \emph{signless Laplacian matrix} and the \emph{normalized Laplacian matrix} of $G$ if $M$ equals $A(G)$, $Q(G)=D(G)+A(G)$ and $\mathcal{L}(G)=D(G)^{-1/2}(D(G)-A(G))D(G)^{-1/2}=I-D(G)^{-1/2}A(G)D(G)^{-1/2}$.
Conventionally, the \emph{adjacency eigenvalues} and \emph{normalized Laplacian eigenvalues} of graph $G$ are ordered respectively in non-increased sequence as follows:
$\nu_{1}\geq \nu_{2}\geq \cdots\geq \nu_{n}$ and
$\lambda_{1}\geq \lambda_{2}\geq \cdots\geq \lambda_{n}=0$. In fact, Chung \cite{Chung} has proved that $\lambda_{i}\le 2$ for all $i$.

As far as we know, many graph operations such as the \emph{disjoint union}, the \emph{corona}, the \emph{edge corona} and the \emph{neighborhood corona} have been introduced, and their spectra are computed in \cite{1,4,6,7,11,12},
respectively. The \emph{subdivision graph} $S(G)$ of a graph $G$ is the graph obtained by inserting a new vertex into every edge of $G$.
Based on subdivision graph, the graph operations  of
\emph{subdivision-vertex join} and \emph{subdivision-edge join} are
introduced in \cite{8},  and their $A$-spectrum are investigated.
Further works on their $L$-spectrum are given in \cite{9}. In
\cite{10}, the spectra of the so called \emph{subdivision-vertex
corona} and \emph{subdivision-edge corona} are computed,
respectively. In \cite{13}, the spectra of the
\emph{subdivision-vertex} and \emph{subdivision-edge neighborhood
corona} are computed, respectively. Subsequently, Song and Huang~\cite{song} obtained the $A$-spectra and $L$-spectra of the
\emph{subdivision vertex-edge corona} $G_{1}^S\circ(G_{2}^V\cup G_{3}^E)$, see $P_{4}^S\circ(P_{2}^V\cup P_{1}^E)$ for instance, which is shown in Fig.\ref{f-1}.

Motivated by the above works, we define two new graph operations
based on subdivision graph as follows. For a graph $G_1$, let
$S(G_1)$ be the subdividing graph of $G_1$ whose vertex set has two
parts: one the original vertices $V(G_1)$, another, denoted by
$I(G_1)$, the inserting vertices corresponding to the edges of
$G_1$. Let $G_2$ and $G_3$ be other two disjoint graphs.
\begin{definition}\label{d-1}
\emph{Subdivision vertex-edge neighbourhood vertex-corona}
(short for SVEV-corona) of $G_1$ with $G_2$ and $G_3$, denoted by
$G_1^S\bowtie(G_2^V\cup G_3^E)$, is the graph consisting of
$S(G_1)$, $|V(G_1)|$ copies of $G_2$ and $|I(G_1)|$ copies of $G_3$,
all vertex-disjoint, and joining the neighbours of the \emph{i}-th
vertex of $V(G_{1})$ to every vertex in the \emph{i}-th copy of
$G_{2}$ and \emph{i}-th vertex of $I(G_1)$ to each vertex in the
\emph{i}-th copy of $G_3$.
\end{definition}
For instance, we depict $P_4^S\bowtie(P_2^V\cup P_1^E)$ in
Fig.\ref{f-1}. By the definition, $G_1^S\bowtie(G_2^V\cup G_3^E)$
has $n=n_1+m_1+n_1n_2+m_1n_3$ vertices and
$m=2m_1+n_1m_2+m_1m_3+2m_1n_2+m_1n_3$ edges, where $n_i$ and $m_i$
are the number of vertices and edges of $G_i$ for $i=1,2,3$. We see
that $G_1^S\bowtie(G_2^V\cup G_3^E)$ will be a
\emph{subdivision-vertex neighbourhood corona} (see \cite{13}) if
$G_3$ is null, and will be a \emph{subdivision-edge corona} (see
\cite{10}) if $G_2$ is null. Thus \emph{subdivision vertex-edge
neighbourhood vertex-corona} can be viewed as the generalizations of
both \emph{subdivision-vertex neighbourhood corona} (denoted by
$G_1\boxdot G_2$) and \emph{subdivision-edge corona} (denoted by
$G_1\circleddash G_3$).
\begin{definition}\label{d-2}
\emph{Subdivision vertex-edge neighbourhood edge-corona} (short
for SVEE-corona) of $G_1$ with $G_2$ and $G_3$, denoted by
$G_1^S\diamondsuit(G_2^V\cup G_3^E)$, is the graph consisting of
$S(G_1)$, $|V(G_1)|$ copies of $G_2$ and $|I(G_1)|$ copies of $G_3$,
all vertex-disjoint, joining the neighbours of the \emph{i}-th
vertex of $I(G_{1})$ to every vertex in the \emph{i}-th copy of
$G_{2}$ and \emph{i}-th vertex of $V(G_1)$ to each vertex in the
\emph{i}-th copy of $G_3$.
\end{definition}

For instance, we depict $P_4^S\diamondsuit(P_2^V\cup P_1^E)$ in
Fig.\ref{f-1}. By the definition, $G_1^S\diamondsuit(G_2^V\cup
G_3^E)$ has $n=n_1+m_1+m_1n_2+n_1n_3$ vertices and
$m=2m_1+m_1m_2+n_1m_3+n_1n_3+2m_1n_2$ edges, where $n_i$ and $m_i$
are the number of vertices and edges of $G_i$ for $i=1,2,3$. We see
that $G_1^S\diamondsuit(G_2^V\cup G_3^E)$ will be a
\emph{subdivision vertex-edge neighbourhood corona} (see \cite{13})
if $G_3$ is null, and will be a \emph{subdivision edge corona} (see
\cite{10}) if $G_2$ is null. Thus \emph{subdivision vertex-edge
neighbourhood edge-corona} can be viewed as the generalizations of
both \emph{subdivision-edge neighbourhood corona} (denoted by
$G_1\boxminus G_2$) and \emph{subdivision-vertex corona} (denoted by
$G_1\odot G_3$).

\begin{figure}[H]
  \centering
  \includegraphics[width=4.0in]{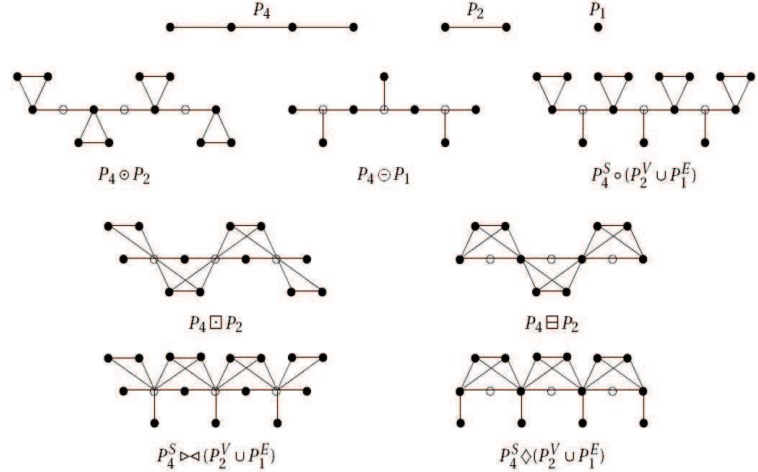}
  \caption {some related
graphs}\label{f-1}
\end{figure}
The \emph{normalized Laplacian matrix} of a graph is introduced in Chung\cite{Chung}. It is a rather new but important tool popularized
by Chung in 1990s. The normalized Laplacian eigenvalues of a graph have a good relationship with other graph invariants for general graphs in a way that other eigenvalues of matrices (such as the adjacency eigenvalues) fail to do. Thus, for a given graph, calculating its normalized Laplacian spectrum as well as formulating the normalized Laplacian characteristic polynomial is a fundamental and very meaningful work in spectral graph theory. Recently, several graph operations such as \emph{subdivisions}\cite{xie}, \emph{$R$-graph}\cite{Yu}, their normalized Laplacian spectra were computed. In 2017, Das and Panigrahi in \cite{Das} have determined the normalized Laplacian spectra of \emph{subdivision-vertex(edge)corona}\cite{10} and \emph{subdivision-vertex (edge) neighbourhood corona}\cite{13}.

In this paper, we focus on determining the normalized Laplacian spectra of $G_1^S\bowtie (G_2^V\cup G_3^E)$ and $G_1^S\diamondsuit(G_2^V\cup G_3^E)$ in terms of the corresponding normalized Laplacian spectra of
connected regular graphs $G_{1}$, $G_{2}$ and $G_{3}$, which extends the corresponding results of \cite{Das}. As applications, these results enable us to construct infinitely many pairs of $\mathcal{L}$-cospectral graphs. Moreover, we also give the number of the
spanning trees, the multiplicative degree-Kirchhoff index
and Kemeny's constant of $G_1^S\bowtie (G_2^V\cup G_3^E)$ (
$G_1^S\diamondsuit(G_2^V\cup G_3^E)$, respectively).
\section{Preliminaries }\label{s-2}
In this section, we first list some known results for latter use.
\begin{lem}[\!\cite{1}]\label{2-lem-1}
For a graph $G$, let $R(G)$ and $\ell(G)$ be the incidence
matrix of $G$ and the line graph of $G$, respectively. Then
\begin{equation}\label{eq-1}
R(G)^TR(G)=2I_m+A(\ell(G))
\end{equation}
where $m$ is the number of edges of $G$.
\end{lem}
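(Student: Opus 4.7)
The plan is to establish the identity entrywise by expanding the matrix product directly from the definition of the incidence matrix. I would begin by recalling that $R(G)$ is the $n\times m$ matrix whose $(v,e)$-entry equals $1$ when $v$ is an endpoint of $e$ and $0$ otherwise; consequently $R(G)^{T}R(G)$ is an $m\times m$ matrix, and for any two edges $e_i,e_j\in E(G)$ its $(i,j)$-entry is
\[
\bigl(R(G)^{T}R(G)\bigr)_{ij}=\sum_{v\in V(G)}R(G)_{v,e_i}\,R(G)_{v,e_j},
\]
which simply counts the vertices of $G$ that are endpoints of both $e_i$ and $e_j$.

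Next I would split into two cases. When $i=j$, this count equals the number of endpoints of $e_i$, and since $G$ is a simple graph (no loops) it is exactly $2$; hence the diagonal of $R(G)^{T}R(G)$ contributes the term $2I_m$. When $i\neq j$, the count is at most $1$ because $G$ has no multiple edges, so two distinct edges share at most one vertex; it equals $1$ precisely when $e_i$ and $e_j$ have a common endpoint, and $0$ otherwise. By the definition of the line graph---whose vertex set is $E(G)$ with two vertices adjacent iff the corresponding edges share an endpoint---this off-diagonal value coincides exactly with the $(i,j)$-entry of $A(\ell(G))$.

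Assembling the two cases gives $R(G)^{T}R(G)=2I_m+A(\ell(G))$, as required. I anticipate no real obstacle here: the argument is a one-step bookkeeping exercise on the entries. The only subtle point is to invoke the simple-graph hypothesis (no loops and no multiple edges) so that the diagonal entries are exactly $2$ and the off-diagonal entries are $\{0,1\}$-valued, which is precisely what is needed for them to match $A(\ell(G))$.
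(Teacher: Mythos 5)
Your proof is correct. The paper states this lemma as a known result cited from the literature and gives no proof of its own, but your entrywise computation is exactly the standard argument: the $(i,j)$-entry of $R(G)^{T}R(G)$ counts common endpoints of $e_i$ and $e_j$, yielding $2$ on the diagonal (no loops) and the line-graph adjacency off the diagonal (distinct edges of a simple graph share at most one vertex). Nothing is missing.
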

Note that
\begin{equation}\label{eq-7}
R(G)R(G)^T=D(G)+A(G)=Q(G).
\end{equation}
Since non-zero eigenvalues of $R(G)R(G)^T$ and $R(G)^TR(G)$ are the
same, from the relations (\ref{eq-1}) and (\ref{eq-7}) one can
obtain
\begin{equation}\label{eq-6}
\Phi_{A(\ell(G))}(x)=(x+2)^{m-n}\Phi_{Q(G)}(x+2).
\end{equation}

In particular, if $G$ is $r$--regular graph, then by Lemma
\ref{2-lem-1}, we immediately have the following corollary.
\begin{cor}\label{2-cor-1}
Let $G$ be an $r$--regular graph of order $n$. Then
$$
\begin{array}{ll}
\Phi_{A(\ell(G))}(x)&=(x+2)^{m-n}\cdot\prod\limits_{i=1}^{n}(x-(r-2)-\nu_i(G)),\\
\Phi_{A(\ell(G))}(x)&=(x+2)^{m-n}\cdot\prod\limits_{i=1}^{n}(x-(2r-2)+r\lambda_i(G))\\
\end{array}$$
where $\nu_i(G)$ and $\lambda_i(G)$ arethe eigenvalues of $A(G)$ and $\mathcal{L}(G)$, respectively .
\end{cor}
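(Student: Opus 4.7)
The plan is to derive both factorizations directly from equation~(\ref{eq-6}), which Lemma~\ref{2-lem-1} has already supplied, by specializing to the regular case. First I would observe that for an $r$-regular graph $G$, the degree matrix $D(G)$ collapses to a scalar multiple of the identity, so the signless Laplacian simplifies as $Q(G)=D(G)+A(G)=rI_n+A(G)$. Hence every eigenvalue of $Q(G)$ has the form $r+\nu_i(G)$, which yields
\begin{equation*}
\Phi_{Q(G)}(y)=\prod_{i=1}^{n}\bigl(y-r-\nu_i(G)\bigr).
\end{equation*}

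Next I would plug this into the factorization (\ref{eq-6}) with $y=x+2$. After rearranging, $(x+2)-r-\nu_i(G)=x-(r-2)-\nu_i(G)$, giving the first displayed identity
\begin{equation*}
\Phi_{A(\ell(G))}(x)=(x+2)^{m-n}\prod_{i=1}^{n}\bigl(x-(r-2)-\nu_i(G)\bigr).
\end{equation*}

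To pass to the second identity, I would use the standard relationship between the adjacency and normalized Laplacian spectra for an $r$-regular graph: since $D(G)=rI_n$, one has $\mathcal{L}(G)=I_n-\tfrac{1}{r}A(G)$, so the eigenvalues satisfy $\nu_i(G)=r-r\lambda_i(G)$. Substituting this into the first formula gives $x-(r-2)-\nu_i(G)=x-(2r-2)+r\lambda_i(G)$, producing the second displayed product.

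I do not expect any serious obstacle; the whole argument is a direct specialization of (\ref{eq-6}) using the two simplifications available in the regular case ($Q(G)=rI+A(G)$ and $\mathcal{L}(G)=I-\tfrac{1}{r}A(G)$). The only small care required is keeping the shift $x\mapsto x+2$ consistent when writing out the product.
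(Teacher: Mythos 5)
Your proposal is correct and follows exactly the route the paper intends: specialize (\ref{eq-6}) to the regular case via $Q(G)=rI_n+A(G)$ for the first product, then convert using $\mathcal{L}(G)=I_n-\tfrac{1}{r}A(G)$ for the second (the paper states the corollary as immediate from Lemma \ref{2-lem-1} without writing these steps out). The only cosmetic point is that $\nu_i(G)=r-r\lambda_i(G)$ should be read as a multiset identity, since both spectra are sorted non-increasingly; this does not affect the products.
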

For a graph matrix $M$ of order $n$, we denote by $\textbf{1}_{n}$
and $\textbf{0}_{n}$ the column vector of size $n$ with all the
entries equal one and all the entries equal $0$, respectively.
$M$-\emph{coronal} $\Gamma_M(\lambda)$ is defined, in \cite{4,11}, to be
the sum of the entries of the matrix $(\lambda I-M)^{-1}$, i.e.,
\begin{equation}\label{eq-2}
\begin{array}{ccc}
\Gamma_M(\lambda)=\textbf{1}^{T}_{n}(\lambda I-M)^{-1}\textbf{1}_{n}.
\end{array}
\end{equation}
If $M$ has constant  row sum $t$, it is easy to verify that
\begin{equation}\label{eq-3}
\begin{array}{ccc}
\Gamma_M(\lambda)=\displaystyle \frac{\textbf{1}^{T}_{n}\textbf{1}_{n}}{\lambda-t}=\frac{n}{\lambda-t}.
\end{array}
\end{equation}
It is well known for invertible matrix $M_1$ and $M_4$ that
\begin{equation}\label{eq-5}
\begin{array}{rl}
\det\left(
\begin{array}{rl}
M_1 & M_2\\
M_3 & M_4\\
\end{array}
\right)&=\det(M_4)\cdot \det(M_1-M_2M_4^{-1}M_3)\\
&=\det(M_1)\cdot \det(M_4-M_3M_1^{-1}M_2).
\end{array}
\end{equation}
where $M_1-M_2M_4^{-1}M_3$ and $M_4-M_3M_1^{-1}M_2$ are called the
\emph{Schur complements} \cite{zhang} of $M_4$ and $M_1$, respectively.

For two matrices $A=(a_{ij})$ and $B=(b_{ij})$, of same size $m\times n$, the \emph{Hadamard product} $A\bullet B=(c_{ij})$ of $A$ and $B$ is
a matrix of the same size $m\times n$ with entries given by $c_{ij}=a_{ij}\times b_{ij}$(entrywise multiplication).
The \emph{Kronecker product} $A\otimes B$ of two matrices
$A=(a_{ij})_{m\times n}$ and $B=(b_{ij})_{p\times q}$ is the
$mp\times nq$ matrix obtained from $A$ by replacing each element
$a_{ij}$ by $a_{ij}B$. This is an associative operation with the
property that $(A\otimes B)^{T}=A^T\otimes B^T$ and $(A\otimes
B)(C\otimes D)=AC\otimes BD$ whenever the products $AC$ and $BD$
exist. The latter implies $(A\otimes B)^{-1}=A^{-1}\otimes B^{-1}$
for nonsingular matrices $A$ and $B$. Moreover, if $A$ and $B$ are
$n\times n$ and $p\times p$ matrices, then $\det(A\otimes
B)=\det(A)^{p}\det(B)^{n}$. The reader is referred to \cite{12} for
other properties of the Kronecker product not mentioned here.

\section{The $\mathcal{L}$-spectra of SVEV-corona and SVEE-corona}
In this section, we mainly determe the normalized Laplacian spectrum of \emph{SVEV-corona} and \emph{SVEE-corona}, respectively. For the sake of convenience, we write $\mathcal{G}=G_1^S\bowtie(G_2^V\cup G_3^E)$ and $\mathfrak{G}=G_1^S\diamondsuit(G_2^V\cup G_3^E)$ for short. And we respectively denote the eigenvalues of $\mathcal{L}(G_1)$, $\mathcal{L}(G_2)$ and $\mathcal{L}(G_3)$ by $\theta_i$, $\mu_j$ and $\eta_k$ for $i,j,k=1,2,\ldots,n$. Those symbols will be persisted in what follows.

\begin{thm}\label{thm-3-1}
Let $G_i$ be an $r_i$-regular graph with $n_i$ vertices and $m_i$ edges, where $i=1,2,3$. Then
\begin{equation}\label{eq-3-1}
\mathcal{L}(\mathcal{G})=\left(
\begin{matrix}
I_{n_1} \!\!&\!\! -aR(G_1) \!\!&\!\! O_{n_1\times n_1n_2} \!\!&\!\! O_{n_1\times m_1n_3} \\
-aR(G_1)^T & I_{m_1} & -R(G_1)^T\otimes b_{n_{2}}^T & -I_{m_1}\otimes c_{n_{3}}^T\\
O_{n_1n_2\times n_1} \!\!&\!\! -R(G_1)\otimes b_{n_{2}} \!&\! I_{n_1}\otimes(\mathcal{L}(G_2)\!\bullet\! B(G_2)) \!\!\!&\!\!\! O_{n_{1}n_2\times m_1n_3}\\
O_{m_{1}n_3\times n_1} \!\!&\!\! -I_{m_1}\otimes c_{n_{3}} \!\!&\!\! O_{m_{1}n_3\times n_1n_2} \!\!&\!\! I_{m_1}\otimes(\mathcal{L}(G_3)\!\bullet\! B(G_3))
\end{matrix}
\right)
\end{equation}
where $J_{n}$ is a all-1 matrix of order $n$, and $a=\frac{1}{\sqrt{r_1(2n_2+n_3+2)}}$ is a constant. Moreover,  $b_{n_2}=\frac{1}{\sqrt{(r_1+r_2)(2n_2+n_3+2)}}\mathbf{1}_{n_2}$, $c_{n_3}=\frac{1}{\sqrt{(r_3+1)(2n_2+n_3+2)}}\mathbf{1}_{n_3}$, $B(G_2)=\frac{r_2}{r_1+r_2}J_{n_{2}}-\frac{r_2-1}{r_1+r_2}I_{n_{2}}$, $B(G_3)=\frac{r_3}{r_3+1}J_{n_{3}}-\frac{r_3-1}{r_3+1}I_{n_{3}}$.

\begin{equation}\label{eq-3-2}
\mathcal{L}(\mathfrak{G})=\left(
\begin{matrix}
I_{n_1} \!&\! -aR(G_1) \!&\! -R(G_1)\otimes b_{n_{2}}^T \!&\! -I_{n_1}\otimes c_{n_{3}}^T \\
-aR(G_1)^T & I_{m_1} & O_{m_1\times m_1n_2} & O_{m_1\times n_1n_3}\\
-R(G_1)^T\otimes b_{n_{2}} \!\!&\!\! O_{m_1n_2\times m_1} \!&\! I_{m_1}\otimes(\mathcal{L}(G_2)\!\bullet\!B(G_2)) \!\!\!&\!\!\! O_{m_{1}n_2\times n_1n_3}\\
-I_{n_1}\otimes c_{n_{3}} \!\!&\!\! O_{n_1n_3\times m_1} \!\!&\!\! O_{n_{1}n_3\times m_1n_2} \!\!&\!\! I_{n_1}\otimes(\mathcal{L}(G_3)\!\bullet\! B(G_3))
\end{matrix}
\right)
\end{equation}
where $b_{n_2}=\frac{1}{\sqrt{(r_2+2)(r_1n_2+r_1+n_3)}}\mathbf{1}_{n_{2}}$, $c_{n_3}=\frac{1}{\sqrt{(r_3+1)(r_1n_2+r_1+n_3)}}\mathbf{1}_{n_{3}}$, $B(G_2)=\frac{r_2}{r_2+2}J_{n_{2}}-\frac{r_2-1}{r_2+2}I_{n_{2}}$, $B(G_3)=\frac{r_3}{r_3+1}J_{n_{3}}-\frac{r_3-1}{r_3+1}I_{n_{3}}$, and $a=\frac{1}{\sqrt{2(r_1n_2+r_1+n_3)}}$ is a constant.
\end{thm}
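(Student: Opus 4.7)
The plan is to verify both formulas \eqref{eq-3-1} and \eqref{eq-3-2} directly from the definition $\mathcal{L}(G) = I - D(G)^{-1/2} A(G) D(G)^{-1/2}$ after choosing a careful block ordering of the vertex set, together with one small algebraic rewriting at the end to produce the Hadamard-product form $\mathcal{L}(G_i)\bullet B(G_i)$.

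First, for $\mathcal{G} = G_1^S\bowtie(G_2^V\cup G_3^E)$, I would order the vertices into four blocks in the order $V(G_1)$, $I(G_1)$, the $n_1$ copies of $G_2$ indexed by $V(G_1)$, and the $m_1$ copies of $G_3$ indexed by $I(G_1)$. Reading the adjacencies off Definition~\ref{d-1}: inside $S(G_1)$ the only entries come from the incidence matrix $R(G_1)$; every vertex in the $i$-th copy of $G_2$ is joined to exactly the neighbours of $v_i$ in $S(G_1)$, which is captured by the Kronecker block $R(G_1)^{T}\otimes\mathbf{1}_{n_2}^{T}$; each inserted vertex is joined to its own copy of $G_3$, contributing $I_{m_1}\otimes\mathbf{1}_{n_3}^{T}$; and the copies of $G_2$, $G_3$ contribute diagonal Kronecker blocks $I_{n_1}\otimes A(G_2)$ and $I_{m_1}\otimes A(G_3)$.

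Next I would compute the degrees using that $G_i$ is $r_i$-regular: $\deg(v)=r_1$ for $v\in V(G_1)$; $\deg(e)=2+2n_2+n_3$ for $e\in I(G_1)$ (two endpoints in $V(G_1)$, two whole copies of $G_2$ and one copy of $G_3$); $\deg(u)=r_1+r_2$ for $u$ in a copy of $G_2$; and $\deg(w)=r_3+1$ for $w$ in a copy of $G_3$. Hence $D(\mathcal{G})^{-1/2}$ is block-diagonal with entries $1/\sqrt{r_1},\,1/\sqrt{2+2n_2+n_3},\,1/\sqrt{r_1+r_2},\,1/\sqrt{r_3+1}$. Plugging into $I-D^{-1/2}AD^{-1/2}$, each off-diagonal block of $A(\mathcal{G})$ picks up the product of the two relevant reciprocal square roots; these are exactly the constants $a=1/\sqrt{r_1(2n_2+n_3+2)}$, $b_{n_2}$, $c_{n_3}$ stated in the theorem.

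The only block requiring an extra step is the one for the $G_2$-copies, which from the computation above equals
\begin{equation*}
I_{n_1n_2}-\tfrac{1}{r_1+r_2}\,I_{n_1}\otimes A(G_2)\;=\;I_{n_1}\otimes\Bigl(I_{n_2}-\tfrac{1}{r_1+r_2}A(G_2)\Bigr),
\end{equation*}
and similarly for the $G_3$-copies. Using $\mathcal{L}(G_2)=I_{n_2}-\tfrac{1}{r_2}A(G_2)$ (and the analogue for $G_3$), one checks entrywise that the inner factor equals $\mathcal{L}(G_2)\bullet B(G_2)$ with the stated $B(G_2)=\tfrac{r_2}{r_1+r_2}J_{n_2}-\tfrac{r_2-1}{r_1+r_2}I_{n_2}$: off-diagonal entries are $-1/(r_1+r_2)$ for adjacent pairs and $0$ otherwise, which is all that is forced by the normalised-Laplacian structure. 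Assembling all four rows and columns yields \eqref{eq-3-1}.

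For $\mathfrak{G}=G_1^S\diamondsuit(G_2^V\cup G_3^E)$ I would run exactly the same programme, swapping the roles of $V(G_1)$ and $I(G_1)$ as attachment hosts according to Definition~\ref{d-2}. The block shape in \eqref{eq-3-2} then differs from \eqref{eq-3-1} only by moving the two attachment blocks from the $I(G_1)$ row/column to the $V(G_1)$ row/column; the degrees recompute to $r_1n_2+r_1+n_3$ for $V(G_1)$, $2$ for $I(G_1)$, $r_2+2$ for $G_2$-copies (an edge in $G_1$ has only its two endpoints as neighbours in $S(G_1)$) and $r_3+1$ for $G_3$-copies, which in turn produces the stated constants $a=1/\sqrt{2(r_1n_2+r_1+n_3)}$, $b_{n_2}$, $c_{n_3}$ and $B(G_2)=\tfrac{r_2}{r_2+2}J_{n_2}-\tfrac{r_2-1}{r_2+2}I_{n_2}$. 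The only real work is bookkeeping — keeping straight which incidence pattern generates which Kronecker block, and verifying that the degrees are uniform within each of the four classes so that $D^{-1/2}$ is a scalar within each block; once this is checked, both identities reduce to direct expansion.
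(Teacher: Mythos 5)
Your proposal is correct and follows essentially the same route as the paper's own proof: both write $A(\mathcal{G})$ and $D(\mathcal{G})$ blockwise under the ordering $V(G_1)$, $I(G_1)$, the $G_2$-copies, the $G_3$-copies, compute the four constant degree values, and conjugate to get $I-D^{-1/2}AD^{-1/2}$, finishing with the Hadamard rewriting. One caveat: in the entrywise check of $I_{n_2}-\tfrac{1}{r_1+r_2}A(G_2)=\mathcal{L}(G_2)\bullet B(G_2)$ the diagonal is not ``forced'' and must also be verified, and with $B(G_2)$ exactly as printed the diagonal of the Hadamard product comes out to $\tfrac{1}{r_1+r_2}$ rather than $1$ (the coefficient of $I_{n_2}$ in $B(G_2)$ needs to be $+\tfrac{r_1}{r_1+r_2}$, and likewise $+\tfrac{1}{r_3+1}$ in $B(G_3)$) --- an issue inherited from the statement itself rather than from your argument, since the block you actually computed, $I_{n_1}\otimes\bigl(I_{n_2}-\tfrac{1}{r_1+r_2}A(G_2)\bigr)$, is the correct one.
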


\begin{proof}
The proof of Eq.(\ref{eq-3-2}) is similar to the Eq.(\ref{eq-3-1}), so it's only necessary to show the result given in Eq.(\ref{eq-3-1}) as follows.

We first label the vertices of $\mathcal{G}$:
$V(G_1)=\{v_1,v_2,\ldots,v_{n_1}\}$,
$I(G_1)=\{e_1,e_2,\ldots,e_{m_1}\}$,
$V(G_2)=\{u_1,u_2,\ldots,u_{n_2}\}$ and
$V(G_3)=\{w_1,w_2,\ldots,w_{n_3}\}$. For $i=1,2,\ldots,n_1$, let
$U_i=\{u_1^i,u_2^i,$ $\ldots,u_{n_2}^i\}$ denote the vertices of the
$i$-th copy of $G_2$ in $\mathcal{G}$, and $W_j=\{w_1^j,$
$w_2^j,\ldots,w_{n_3}^j\}~(j=1,2,\ldots,m_1)$ the $j$-th copy of
$G_3$ in $\mathcal{G}$. Then the vertices of $\mathcal{G}$ are partitioned by
\begin{equation}\label{eq-3-3}
\begin{array}{ccc}
V(G_1)\cup I(G_1)\cup (U_1\cup U_2\cup\ldots\cup
U_{n_1})\cup(W_1\cup W_2\cup\ldots\cup W_{m_1}).
\end{array}
\end{equation}
 By Definition \ref{d-1} we see that
\begin{equation*}\left\{
\begin{array}{rl}
d_{\mathcal{G}}(v_i)&=d_{G_1}(v_i)=r_1, \hspace{5.0cm} i=1,2,\ldots,n_1; \\
d_{\mathcal{G}}(e_i)&=2n_{2}+n_3+2, \hspace{5cm} i=1,2,\ldots,m_1;\\
d_{\mathcal{G}}(u_j^{i})&=d_{G_2}(u_j)+d_{G_1}(v_i)=r_{2}+r_1,  \hspace{2cm}
j=1,2,\ldots,n_2,i=1,2,\ldots,n_1;\\
d_{\mathcal{G}}(w_j^{i})&=d_{G_3}(w_j)+1=r_3+1, \hspace{3cm} j=1,2,\ldots,n_3,
i=1,2,\ldots,m_1.
\end{array}\right.
\end{equation*}
Let $R(G_{1})$ be the vertex-edge incidence matrix of $G_1$. Then the
adjacency matrix of $\mathcal{G}$ can be
represented in the form of block-matrix according to the ordering
of (\ref{eq-3-3}) in the
following.
\begin{equation*}
A(\mathcal{G})=\left(
\begin{matrix}
O_{n_{1}\times n_{1}} ~& R(G_1) ~& O_{n_1\times n_1n_2} ~& O_{n_1\times m_1n_3} \\
R(G_1)^T ~& O_{m_{1}\times m_{1}} ~& R(G_1)^T\otimes\textbf{1}_{n_{2}}^T ~& I_{m_1}\otimes\textbf{1}_{n_{3}}^T\\
O_{n_1n_2\times n_1} ~& R(G_1)\otimes\textbf{1}_{n_{2}} ~& I_{n_1}\otimes A(G_2) ~& O_{n_{1}n_2\times m_1n_3}\\
O_{m_{1}n_3\times n_1} ~&
I_{m_1}\otimes\textbf{1}_{n_{3}}
~& O_{m_{1}n_3\times n_1n_2} ~& I_{m_1}\otimes A(G_3)
\end{matrix}
\right),
\end{equation*} and
\begin{equation*}
D(\mathcal{G})=\left(
\begin{matrix}
r_1I_{n_1} \!\!\!&\!\!\!   \!\!\!&\!\!\!  \!\!\!&\!\!\! \\
 \!\!\!&\!\!\!  (2n_2+n_3+2)I_{m_1} \!\!\!&\!\!\!  \!\!\!&\!\!\! \\
 \!\!&\!\!   \!\!&\!\! (r_1+r_2)I_{n_1n_2} \!\!&\!\! \\
 \!\!&\!\!   \!\!&\!\!  \!\!&\!\! (r_3+1)I_{m_1n_3}
\end{matrix}
\right)
\end{equation*}
where $A(G_{2})$ and $A(G_{3})$ represent the adjacency matrices of
$G_{2}$ and $G_{3}$, respectively.

Since $G_2$ is an $r_2$-regular graph, we have $\mathcal{L}(G_2)=I_{n_2}-\frac{1}{r_2}A(G_2)$. Thus,
$$\mathcal{L}(G_2)\bullet B(G_2)=(I_{n_2}-\frac{1}{r_2}A(G_2))\bullet B(G_2)=I_{n_2}-\frac{1}{r_1+r_2}A(G_2).$$
By direct computation, one can obtain that
$$I_{n_1n_2}-\frac{1}{r_1+r_2}I_{n_1}\otimes A(G_2)=I_{n_1}\otimes(\mathcal{L}(G_2)\bullet B(G_2)).$$
Furthermore, we can obtain that
$$I_{m_1n_3}-\frac{1}{r_3+1}I_{m_1}\otimes A(G_3)=I_{m_1}\otimes(\mathcal{L}(G_3)\bullet B(G_3)).$$
By $\mathcal{L}(\mathcal{G})=I-D(\mathcal{G})^{-1/2}A(\mathcal{G})D(\mathcal{G})^{-1/2}$, the required normalized Laplacian matrix is given below:
\begin{equation*}
\mathcal{L}(\mathcal{G})=\left(
\begin{matrix}
I_{n_1} \!&\! -aR(G_1) \!&\! O_{n_1\times n_1n_2} \!&\! O_{n_1\times m_1n_3} \\
-aR(G_1)^T & I_{m_1} & -R(G_1)^T\otimes b_{n_{2}}^T & -I_{m_1}\otimes c_{n_{3}}^T\\
O_{n_1n_2\times n_1} \!\!&\!\! -R(G_1)\otimes b_{n_{2}} \!&\! I_{n_1}\otimes(\mathcal{L}(G_2)\!\bullet\! B(G_2)) \!\!\!&\!\!\! O_{n_{1}n_2\times m_1n_3}\\
O_{m_{1}n_3\times n_1} \!\!&\!\! -I_{m_1}\otimes c_{n_{3}} \!\!&\!\! O_{m_{1}n_3\times n_1n_2} \!\!&\!\! I_{m_1}\otimes(\mathcal{L}(G_3)\!\bullet\! B(G_3))
\end{matrix}
\right).
\end{equation*}
The proof is completed.
\end{proof}
\begin{remark}\label{remark-3-1}
Obviously, using the same partition of \emph{(\ref{eq-3-3})} to the graph $\mathfrak{G}$, one can obtain
\begin{equation*}
\left\{\begin{array}{rl}
d_{\mathfrak{G}}(e_i)& =2, \ \hspace{6.5cm}  i=1,2,\ldots,m_1;\\
d_{\mathfrak{G}}(u_j^{i})& =d_{G_2}(u_j)+2=r_{2}+2, \hspace{3.5cm}
j=1,2,\ldots,n_2,i=1,2,\ldots,n_1;\\
d_{\mathfrak{G}}(w_j^{i})& =d_{G_3}(w_j)+1=r_{3}+1, \hspace{3.5cm} j=1,2,\ldots,n_3, i=1,2,\ldots,m_1;\\
d_{\mathfrak{G}}(v_i)& =(n_{2}+1)d_{G_1}(v_i)+n_{3}=(n_{2}+1)r_{1}+n_{3}, \hspace{0.6cm} i=1,2,\ldots,n_1;\\
\end{array}\right.
\end{equation*} and
\begin{equation*}
A(\mathfrak{G})=\left(
\begin{matrix}
O_{n_{1}\times n_{1}} ~& R(G_1) ~& R(G_1)\otimes~\textbf{1}_{n_{2}}^{T} ~& I_{n_{1}}~\otimes~\textbf{1}^{T}_{n_{3}} \\
R^T(G_1) ~& O_{m_{1}\times m_{1}} ~& I_{m_{1}}\otimes~\textbf{0}_{n_{2}}^{T} ~& R^{T}(G_1)\otimes~\textbf{0}^{T}_{n_{3}}\\
R^{T}(G_1)\otimes~\textbf{1}_{n_{2}}~& I_{m_{1}}~\otimes~\textbf{0}_{n_{2}} ~& I_{m_1}\otimes~A(G_2) ~& R^{T}(G_1)~\otimes~O_{n_{2}\times n_{3}}^{T}\\
I_{n_{1}}\otimes~\textbf{1}_{n_{3}} ~& R(G_1)\otimes~\textbf{0}_{n_{3}}
~&R(G_1)\otimes~O_{n_{3}\times n_{2}}~& I_{n_1}\otimes~A(G_3)
\end{matrix}
\right).
\end{equation*}
By immediate calculation, the $\mathcal{L}(\mathfrak{G})$ follows.
\end{remark}
\begin{thm}\label{thm-3-2}
Let $G_i$ be an $r_i$-regular graph with $n_i$ vertices and $m_i$ edges, where $i=1,2,3$. Then
\begin{equation*}
\begin{array}{rl}
\emph{(i)}\hspace{0.2cm} \Phi_{\mathcal{L}(\mathcal{G})}(\lambda)\!\!\!\!&=\!\displaystyle(\lambda\!\!-\!\!1\!\!
-\!\frac{n_3}{(r_3\!\!+\!\!1)(2n_2\!\!+\!\!n_3\!\!+\!\!2)(\lambda\!\!
-\!\!\frac{1}{r_3+1})})^{m_1\!-\!n_1}\!\!\cdot\!\!\textstyle\prod\limits_{j=1}^{n_2}
(\lambda\!\!-\!\frac{r_1+r_2\mu_j}{r_1+r_2})^{n_1}\!\!\cdot\!\!
\prod\limits_{k=1}^{n_3}(\lambda\!\!-\!\frac{1+r_3\eta_k}{r_3+1})^{m_1}\\
&\times\!\textstyle\prod\limits_{i=1}^{n_1}\big((\lambda\!-\!1)
(\lambda\!-\!1\!-\!\frac{n_3}{(r_3+1)(2n_2+n_3+2)(\lambda-\frac{1}{r_3+1})})\!
-\!\frac{(2-\theta_i)((r_1+r_2+n_2r_1)\lambda-(1+n_2)r_1)}{(r_1+r_2)(2n_2+n_3+2)
(\lambda-\frac{r_1}{r_1+r_2})}\big);
\end{array}
\end{equation*}
\begin{equation*}
\begin{array}{rl}
\hspace{-1.2cm}\emph{(ii)}\hspace{0.2cm}\Phi_{\mathcal{L}(\mathfrak{G})}(\lambda)\!\!\!\!&=
\!\displaystyle(\lambda\!-\!1)^{m_1\!-\!n_1}\!\cdot\!\textstyle
\prod\limits_{j=1}^{n_2}(\lambda\!\!-\!\frac{2+r_2\mu_j}{r_2+2})^{m_1}\!\cdot
\!\prod\limits_{k=1}^{n_3}(\lambda\!\!-\!\frac{1+r_3\eta_k}{r_3+1})^{n_1}\\
&\times\!\textstyle\prod\limits_{i=1}^{n_1}\big((\lambda\!-\!1)
(\lambda\!-\!1\!-\!\frac{n_3}{(r_3+1)(r_1n_2+n_3+r_1)(\lambda-\frac{1}{r_3+1})})
\!-\!\frac{r_1(2-\theta_i)((2n_2+r_2+2)\lambda-2n_2-2)}{2(r_2+2)(r_1n_2+n_3+r_1)
(\lambda-\frac{2}{r_2+2})}\big)
\end{array}
\end{equation*}
where $\theta_i$, $\mu_j$ and $\eta_k$ are the eigenvalues of $\mathcal{L}(G_1)$, $\mathcal{L}(G_2)$ and $\mathcal{L}(G_3)$, respectively.
\end{thm}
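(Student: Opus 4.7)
The plan is to compute $\Phi_{\mathcal{L}(\mathcal{G})}(\lambda)=\det(\lambda I-\mathcal{L}(\mathcal{G}))$ starting from the block form (\ref{eq-3-1}) by a single orthogonal conjugation followed by two Schur-complement reductions and a final orthogonal diagonalisation; part (ii) will be handled identically from (\ref{eq-3-2}). Because $G_2,G_3$ are regular, $\mathbf{1}_{n_i}/\sqrt{n_i}$ is a unit $0$-eigenvector of $\mathcal{L}(G_i)$, so I may choose real orthogonal $U_2,U_3$ with this vector as first column and diagonalising $\mathcal{L}(G_2),\mathcal{L}(G_3)$. The identities $\mathcal{L}(G_2)\bullet B(G_2)=I_{n_2}-\frac{1}{r_1+r_2}A(G_2)$ and $\mathcal{L}(G_3)\bullet B(G_3)=I_{n_3}-\frac{1}{r_3+1}A(G_3)$ already used in the proof of Theorem~\ref{thm-3-1} show that the same $U_2,U_3$ simultaneously diagonalise these matrices, with eigenvalues $\frac{r_1+r_2\mu_j}{r_1+r_2}$ and $\frac{1+r_3\eta_k}{r_3+1}$ respectively. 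Conjugating $\lambda I-\mathcal{L}(\mathcal{G})$ by $I_{n_1}\oplus I_{m_1}\oplus(I_{n_1}\otimes U_2)\oplus(I_{m_1}\otimes U_3)$ and exploiting that $b_{n_2},c_{n_3}$ are parallel to $\mathbf{1}$, so that $U_2^Tb_{n_2}\propto e_1$ and $U_3^Tc_{n_3}\propto e_1$, decouples every coordinate inside the $G_2,G_3$ copies \emph{except} the first, contributing the factor $\prod_{j=2}^{n_2}(\lambda-\frac{r_1+r_2\mu_j}{r_1+r_2})^{n_1}\prod_{k=2}^{n_3}(\lambda-\frac{1+r_3\eta_k}{r_3+1})^{m_1}$.

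What remains is a $2(n_1+m_1)$-dimensional coupled system with scalar diagonal blocks $(\lambda-1)I_{n_1}$, $(\lambda-1)I_{m_1}$, $(\lambda-\frac{r_1}{r_1+r_2})I_{n_1}$ and $(\lambda-\frac{1}{r_3+1})I_{m_1}$ on the $V(G_1)$, $I(G_1)$, distinguished-$U_2$ and distinguished-$U_3$ subspaces, with off-diagonal couplings given by scalar multiples of $R(G_1)$ and $I_{m_1}$. A first Schur complement via (\ref{eq-5}) eliminating the two distinguished scalar-identity blocks is clean and contributes $(\lambda-\frac{r_1}{r_1+r_2})^{n_1}(\lambda-\frac{1}{r_3+1})^{m_1}$ (which combines with the $j\ge 2,k\ge 2$ diagonal factor above to complete the full $\prod_{j=1}^{n_2}$ and $\prod_{k=1}^{n_3}$ products stated in (i)) and modifies the $I(G_1)$-block to $\alpha(\lambda)I_{m_1}+\beta(\lambda)R(G_1)^TR(G_1)$, where $\alpha(\lambda)=\lambda-1-\frac{n_3}{(r_3+1)(2n_2+n_3+2)(\lambda-\frac{1}{r_3+1})}$ and $\beta(\lambda)=-\frac{n_2}{(r_1+r_2)(2n_2+n_3+2)(\lambda-\frac{r_1}{r_1+r_2})}$. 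A second Schur complement eliminating the $V(G_1)$-block $(\lambda-1)I_{n_1}$ leaves the single $m_1\times m_1$ matrix $\alpha(\lambda)I_{m_1}+\bigl(\beta(\lambda)-\tfrac{a^2}{\lambda-1}\bigr)R(G_1)^TR(G_1)$.

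Finally, from (\ref{eq-7}) and $A(G_1)=r_1(I-\mathcal{L}(G_1))$ one obtains $R(G_1)R(G_1)^T=r_1(2I_{n_1}-\mathcal{L}(G_1))$, so $R(G_1)^TR(G_1)$ has eigenvalues $r_1(2-\theta_i)$ for $i=1,\dots,n_1$ together with $0$ of multiplicity $m_1-n_1$. A third orthogonal diagonalisation splits the remaining determinant into the factor $\alpha(\lambda)^{m_1-n_1}$ coming from the kernel (this is exactly the advertised $(\lambda-1-\frac{n_3}{(r_3+1)(2n_2+n_3+2)(\lambda-\frac{1}{r_3+1})})^{m_1-n_1}$) and $n_1$ factors $(\lambda-1)\alpha(\lambda)-r_1(2-\theta_i)\bigl(a^2-\beta(\lambda)(\lambda-1)\bigr)$ from the nonzero eigenvalues, after absorbing the $(\lambda-1)^{n_1}$ produced by the second Schur step; substituting $a^2=\tfrac{1}{r_1(2n_2+n_3+2)}$ and clearing fractions reproduces the quoted $\prod_{i=1}^{n_1}(\cdots)$ in (i). Part (ii) follows by running the very same argument on (\ref{eq-3-2}); the swap between vertex-copies and edge-copies interchanges the roles of the exponents $n_1,m_1$ on $\prod_j$ and $\prod_k$, and the rank-deficiency $m_1-n_1$ of $R(G_1)^TR(G_1)$ now yields the clean $(\lambda-1)^{m_1-n_1}$ prefactor. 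The main difficulty will be the bookkeeping: I must verify that the denominators $(\lambda-\frac{r_1}{r_1+r_2})^{-n_1}$ and $(\lambda-\frac{1}{r_3+1})^{-m_1}$ generated by the first Schur complement cancel against matching factors from the diagonal piece with the correct multiplicities, so that the residual fractional terms end up inside the $\prod_{i=1}^{n_1}$ product rather than leaking into the $\alpha^{m_1-n_1}$ prefactor.
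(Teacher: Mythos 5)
Your proposal is correct and follows essentially the same route as the paper: eliminate the $G_2$- and $G_3$-copies by Schur complements (producing exactly the paper's coronal terms $\Gamma_2(\lambda),\Gamma_3(\lambda)$, which you package as $\alpha,\beta$), eliminate the $(\lambda-1)I_{n_1}$ block, and evaluate the resulting determinant of a polynomial in $R(G_1)^TR(G_1)$ via its spectrum $\{r_1(2-\theta_i)\}\cup\{0^{(m_1-n_1)}\}$. The only cosmetic differences are that you decouple the copies by an explicit orthogonal conjugation where the paper uses an elementary block row operation, and you read off the spectrum of $R^TR$ directly from $Q(G_1)=r_1(2I-\mathcal{L}(G_1))$ where the paper routes this through its line-graph Corollary~\ref{2-cor-1}.
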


\begin{proof}
According to Theorem \ref{thm-3-1}, the normalized Laplacian characteristic polynomial of $\mathcal{G}$ is
$$\Phi_{\mathcal{L}(\mathcal{G})}(\lambda)=\det(\lambda I_n-\mathcal{L}(\mathcal{G}))=\det(B_0),$$
where
\begin{equation*}
B_{0}=\left(
\begin{smallmatrix}
(\lambda-1)I_{n_{1}} ~~~~&~~~~ aR(G_1) ~~~~&~~~~ O ~~~~&~~~~ O \\
aR(G_1)^T & (\lambda-1)I_{m_1} & R(G_1)^T\otimes b_{n_{2}}^T & I_{m_1}\otimes c_{n_{3}}^T\\
O & R(G_1)\otimes b_{n_{2}} & I_{n_1}\otimes(\lambda I_{n_2}-\mathcal{L}(G_2)\bullet B(G_2)) & O \\
O & I_{m_1}\otimes c_{n_{3}} & O & I_{m_1}\otimes(\lambda I_{n_3}-\mathcal{L}(G_3)\bullet B(G_3))
\end{smallmatrix}
\right).
\end{equation*}

We write $P$ as the elementary block matrix below
\begin{equation*}
P=\left(
\begin{smallmatrix}
I_{n_{1}} \!&\! O \!&\! O \!&\! O \\
O & I_{m_{1}} ~& -R(G_1)^T\otimes( b_{n_{2}}^{T}(\lambda I_{n_2}-\mathcal{L}(G_2)\bullet B(G_2))^{-1}) ~& -I_{m_1}\otimes( c_{n_{3}}^{T}(\lambda I_{n_3}-\mathcal{L}(G_3)\bullet B(G_3))^{-1})\\
O & O & I_{n_1}\otimes I_{n_2} & O \\
O & O & O & I_{m_1}\otimes I_{n_3}
\end{smallmatrix}
\right).
\end{equation*}
Let $B=PB_{0}$. Then
\begin{equation*}
B=\left(
\begin{smallmatrix}
(\lambda-1)I_{n_{1}} \!\!&\!\! aR(G_1) \!\!&\!\! O \!\!&\!\! O \\
aR(G_1)^T & (\lambda-1-\Gamma_{3}(\lambda))I_{m_{1}}\!-\Gamma_{2}(\lambda)R(G_1)^TR(G_1) & O & O \\
O \!&\! R(G_1)\otimes b_{n_{2}} \!\!&\!\! I_{n_1}\otimes(\lambda I_{n_2}\!-\mathcal{L}(G_2)\bullet B(G_2)) \!&\! O \\
O \!&\! I_{m_1}\otimes c_{n_{3}} \!&\! O \!\!&\!\! I_{m_1}\otimes(\lambda I_{n_3}\!-\mathcal{L}(G_3)\bullet B(G_3))
\end{smallmatrix}
\right)
\end{equation*}
where $\Gamma_2(\lambda)=b_{n_2}^T(\lambda I_{n_2}-\mathcal{L}(G_2)\bullet B(G_2))^{-1}b_{n_2}$ and $\Gamma_3(\lambda)=c_{n_3}^T(\lambda I_{n_3}-\mathcal{L}(G_3)\bullet B(G_3))^{-1}c_{n_3}$.

Note that $\det(P)=1$. Then we have
$$\Phi_{\mathcal{L}(\mathcal{G})}(\lambda)=\det(B_0)=\det(P^{-1})\det(B)=\det(B).$$
For the matrix $B$, one can get
$$\det(B)\!\!=\!\!\det\big(I_{n_1}\!\!\otimes(\lambda I_{n_2}\!\!-\!\mathcal{L}(G_2)\bullet B(G_2))\big)\cdot \det\big(I_{m_1}\!\!\otimes(\lambda I_{n_3}\!\!-\!\mathcal{L}(G_3)\bullet B(G_3))\big)\cdot \det(S_1),$$
where
\begin{equation*}
S_1=\left(
\begin{matrix}
(\lambda-1)I_{n_{1}} & aR(G_1) \\
 aR(G_1)^T  &  (\lambda-1-\Gamma_3(\lambda))I_{m_{1}}\!-\Gamma_2(\lambda)R(G_1)^TR(G_1)
\end{matrix}
\right).
\end{equation*}
Let $\theta_i$, $\mu_j$ and $\eta_k$ be the eigenvalues of $\mathcal{L}(G_1)$, $\mathcal{L}(G_2)$ and $\mathcal{L}(G_3)$, respectively.
By applying Eq.(\ref{eq-5}), the following result follows from Corollary \ref{2-cor-1} that
\begin{equation*}
\begin{array}{rl}
\det (S_1)&=\left|
\begin{array}{cc}
(\lambda-1)I_{n_{1}} & aR(G_1) \\\vspace{0.2cm}
 aR(G_1)^T  &  (\lambda-1-\Gamma_{3}(\lambda))I_{m_{1}}\!-\Gamma_{2}(\lambda)R(G_1)^TR(G_1)
\end{array}\right|\\
&=\displaystyle \det((\lambda-1)I_{n_{1}})\cdot \det((\lambda-1-\Gamma_{3}(\lambda))I_{m_{1}}-(\Gamma_{2}(\lambda)+\frac{a^2}{\lambda-1})R(G_1)^TR(G_1)).\\
&=\displaystyle (\lambda-1)^{n_1}\det\big((\lambda-1-\Gamma_{3}(\lambda))I_{m_{1}}-(\Gamma_{2}(\lambda)+\frac{a^2}{\lambda-1})(A(\ell(G_1))+2I_{m_1})\big)\\
&=\displaystyle (\lambda-1)^{n_1}\cdot\big((\lambda-1-\Gamma_{3}(\lambda))-(\Gamma_{2}(\lambda)+\frac{a^2}{\lambda-1})(-2+2)\big)^{m_1-n_1}\\\vspace{0.2cm}
&\hspace{0.5cm}\times\displaystyle ~\det\big((\lambda-1-\Gamma_{3}(\lambda))I_{n_{1}}-(\Gamma_{2}(\lambda)+\frac{a^2}{\lambda-1})(2r_1I_{n_1}-r_1\mathcal{L}(G_1)\big)\\
&=\displaystyle (\lambda\!\!-\!1\!\!-\!\!\Gamma_{3}(\lambda))^{m_1\!-\!n_1}\!\cdot\! \det\big((\lambda\!-\!1)(\lambda\!\!-\!1\!\!-\!\!\Gamma_{3}(\lambda))I_{n_1}\!\!\!-\!r_1((\lambda\!-\!1)\Gamma_{2}(\lambda)\!+\!a^2)(2I_{n_1}\!\!\!-\!\!\mathcal{L}(G_1))\big)\\
&=\displaystyle(\lambda\!\!-\!1\!\!-\!\!\Gamma_{3}(\lambda))^{m_1\!-\!n_1}\!\cdot\! \textstyle\prod\limits_{i=1}^{n_1}\big((\lambda\!-\!1)(\lambda\!\!-\!1\!\!-\!\!\Gamma_{3}(\lambda))\!-\!r_1((\lambda\!-\!1)\Gamma_{2}(\lambda)\!+\!\frac{1}{r_1(2n_2+n_3+2)})(2\!-\!\theta_i)\big).
\end{array}
\end{equation*}

Since $\mathcal{L}(G_2)\bullet B(G_2)=I_{n_2}-\frac{1}{r_1+r_2}A(G_2)$ and $A(G_2)=r_2(I_{n_2}-\mathcal{L}(G_2))$, we get $$\mathcal{L}(G_2)\bullet B(G_2)=\frac{1}{r_1+r_2}(r_1I_{n_2}+r_2\mathcal{L}(G_2)).$$
Similarly, $\mathcal{L}(G_3)\bullet B(G_3)=\frac{1}{r_3+1}(I_{n_3}+r_3\mathcal{L}(G_3))$.\vspace{0.2cm}

Also since $G_2$ is $r_{2}$-regular, the sum of all entries on every row of its normalized Laplacian matrix is zero. In other words, $\mathcal{L}(G_2)b_{n_2}=(1-\frac{r_2}{r_2})b_{n_2}=0\cdot b_{n_2}=\textbf{0}$. Then $(\mathcal{L}(G_2)\bullet B(G_2))b_{n_2}=(1-\frac{r_2}{r_1+r_2})b_{n_2}=\frac{r_1}{r_1+r_2}b_{n_2}$ and $(\lambda I_{n_2}-(\mathcal{L}(G_2)\bullet B(G_2)))b_{n_2}=(\lambda-\frac{r_1}{r_1+r_2})b_{n_2}$. Also, $b_{n_2}^Tb_{n_2}=\frac{n_2}{(r_1+r_2)(2n_2+n_3+2)}$. Thus
$$\Gamma_{2}(\lambda)=b_{n_2}^T(\lambda I_{n_2}-\mathcal{L}(G_2)\bullet B(G_2))^{-1}b_{n_2}=\frac{b_{n_2}^Tb_{n_2}}{\lambda-\frac{r_1}{r_1+r_2}}=\frac{n_2}{(r_1+r_2)(2n_2+n_3+2)(\lambda-\frac{r_1}{r_1+r_2})}.$$
The value of $\Gamma_{3}(\lambda)$ is similar to that of $\Gamma_{2}(\lambda)$, and so,
$$\Gamma_{3}(\lambda)=c_{n_3}^T(\lambda I_{n_3}-\mathcal{L}(G_3)\bullet B(G_3))^{-1}c_{n_3}=\frac{c_{n_3}^Tc_{n_3}}{\lambda-\frac{1}{r_3+1}}=\frac{n_3}{(r_3+1)(2n_2+n_3+2)(\lambda-\frac{1}{r_3+1})}.$$

In summary, the normalized Laplacian characteristic polynomial of $\mathcal{G}$ is
\begin{equation*}
\begin{array}{rl}
\Phi_{\mathcal{L}(\mathcal{G})}(\lambda)\!\!\!\!&=\displaystyle\textstyle\prod\limits_{j=1}^{n_2}(\lambda-\frac{r_1+r_2\mu_j}{r_1+r_2})^{n_1}\cdot\prod\limits_{k=1}^{n_3}(\lambda-\frac{1+r_3\eta_k}{r_3+1})^{m_1}\cdot\det (S_1)\\
&=\!\displaystyle(\lambda\!\!-\!\!1\!\!-\!\frac{n_3}{(r_3\!\!+\!\!1)
(2n_2\!\!+\!\!n_3\!\!+\!\!2)(\lambda\!\!-\!\!\frac{1}{r_3+1})})^{m_1\!-\!n_1}
\!\!\cdot\!\!\textstyle\prod\limits_{j=1}^{n_2}(\lambda\!\!-\!\frac{r_1+r_2\mu_j}
{r_1+r_2})^{n_1}\!\!\cdot\!\!\prod\limits_{k=1}^{n_3}(\lambda\!\!-
\!\frac{1+r_3\eta_k}{r_3+1})^{m_1}\\
&\times\!\textstyle\prod\limits_{i=1}^{n_1}\big((\lambda\!-\!1)
(\lambda\!-\!1\!-\!\frac{n_3}{(r_3+1)(2n_2+n_3+2)(\lambda-\frac{1}{r_3+1})})\!
-\!\frac{(2-\theta_i)((r_1+r_2+n_2r_1)\lambda-(1+n_2)r_1)}{(r_1+r_2)(2n_2+n_3+2)
(\lambda-\frac{r_1}{r_1+r_2})}\big),
\end{array}
\end{equation*}
as required. The proof of (ii) is similar to the (i).
\end{proof}
\begin{remark}\label{remark-3-2}
In Theorem \ref{thm-3-2}, if one of graphs $G_{2}$ and $G_{3}$ is null in $\mathcal{G}$ or $\mathfrak{G}$, then one can directly deduce the main results(see Theorems 2.2-2.5) due to A. Das and P. Panigrahi in \cite{Das}. For the simplicity, we here omit Theorems 2.2-2.5 belonged to A. Das and P. Panigrahi\cite{Das}.
\end{remark}

From Threom \ref{thm-3-2}, one can easily obtain the following corollaries.
\begin{cor}\label{cor-3-1}
Let $G_i$ be an $r_i$-regular graph with $n_i$ vertices and $m_i$ edges for $i=1,2,3$. Then the normalized Laplacian spectrum of $\mathcal{G}$ consist of:
\begin{enumerate}
\item [\emph{(a)}] $\frac{r_1+r_2\mu_j}{r_1+r_2}$ repeated $n_1$ times for each eigenvalue $\mu_j$ of $\mathcal{L}(G_2)$, $j=1,2, \ldots,n_2-1$;
\item [\emph{(b)}] $\frac{1+r_3\eta_k}{r_3+1}$ repeated $m_1$ times for each eigenvalue $\eta_k$ of $\mathcal{L}(G_3)$, $k=1,2, \ldots,n_3-1$;
\item [\emph{(c)}] two roots of the equation
\begin{equation}\label{eq-3.1-c}
\begin{array}{rl}
&(2n_2r_3+n_3r_3+2r_3+2n_2+n_3+2)\lambda^2-(2n_2r_3+n_3r_3+2r_3+4n_2\\
&+2n_3+4)\lambda+2n_2+2=0
\end{array}
\end{equation}
where each root repeats $m_1-n_1$ times;
\item [\emph{(d)}] four roots of the equation
\begin{equation}
\begin{array}{rl}\label{eq-3.1-d}
&(2n_2+n_3+2)((1+r_3)\lambda-1)((r_1+r_2)\lambda-r_1)(\lambda-1)^2-n_3(\lambda-1)((r_1+r_2)\lambda\\
&-r_1)-(2-\theta_i)\big((r_1+r_2+n_2r_1)\lambda-(r_1+r_1n_2)\big)((1+r_3)\lambda-1)=0,
\end{array}
\end{equation}
 where each eigenvalue $\theta_i$ of $\mathcal{L}(G_1)$, $i=1,2,\ldots,n_1$.
\end{enumerate}
\end{cor}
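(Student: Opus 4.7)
The plan is to read the eigenvalues directly off the factorization of $\Phi_{\mathcal{L}(\mathcal{G})}(\lambda)$ supplied by Theorem~\ref{thm-3-2}(i), while carefully tracking the pole/zero cancellations hidden inside the rational factors $\Gamma_{2}(\lambda)$ and $\Gamma_{3}(\lambda)$. The symbolic expression there is a product of four pieces: the coronal factor $(\lambda-1-\Gamma_{3}(\lambda))^{m_1-n_1}$, the tensor-product factors $\prod_{j=1}^{n_2}(\lambda-\tfrac{r_1+r_2\mu_j}{r_1+r_2})^{n_1}$ and $\prod_{k=1}^{n_3}(\lambda-\tfrac{1+r_3\eta_k}{r_3+1})^{m_1}$, and the interaction factor $\prod_{i=1}^{n_1}F_i(\lambda)$, where $F_i$ denotes the bracketed quantity involving $\theta_i$.

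First, since $\Gamma_{3}$ has a simple pole at $\lambda=\tfrac{1}{r_3+1}$, the coronal factor has a pole of order $m_1-n_1$ there; and since each $F_i$ has simple poles at both $\lambda=\tfrac{1}{r_3+1}$ and $\lambda=\tfrac{r_1}{r_1+r_2}$, the interaction factor contributes poles of order $n_1$ at each of these two points. The tensor-product factors carry the corresponding compensating zeros: the $j=n_2$ term, in which $\mu_{n_2}=0$, contributes $(\lambda-\tfrac{r_1}{r_1+r_2})^{n_1}$, and the $k=n_3$ term, in which $\eta_{n_3}=0$, contributes $(\lambda-\tfrac{1}{r_3+1})^{m_1}$. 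After clearing denominators these cancellations are exact, so what actually survives from the tensor-product factors are the eigenvalues in (a), with multiplicity $n_1$ for $j=1,\dots,n_2-1$, and those in (b), with multiplicity $m_1$ for $k=1,\dots,n_3-1$.

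Next, for (c), I would multiply $\lambda-1-\Gamma_{3}(\lambda)$ through by $(2n_2+n_3+2)((r_3+1)\lambda-1)$ to absorb the pole, obtaining $(2n_2+n_3+2)((r_3+1)\lambda-1)(\lambda-1)-n_3$. Direct expansion of this quadratic recovers Equation~(\ref{eq-3.1-c}), and the exponent $m_1-n_1$ then accounts for the stated multiplicity of each of its two roots. For (d), I would multiply each $F_i(\lambda)$ by $(2n_2+n_3+2)((r_3+1)\lambda-1)((r_1+r_2)\lambda-r_1)$ so as to simultaneously absorb both singularities and then rearrange; the resulting degree-four polynomial matches Equation~(\ref{eq-3.1-d}) termwise and supplies four roots per eigenvalue $\theta_i$ of $\mathcal{L}(G_1)$.

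As a consistency check I would verify that the multiplicities tally with $|V(\mathcal{G})|$, namely $n_1(n_2-1)+m_1(n_3-1)+2(m_1-n_1)+4n_1=n_1+m_1+n_1n_2+m_1n_3$, as required. The main obstacle I expect is the pole/zero bookkeeping: confirming that the total pole of order $m_1$ at $\lambda=\tfrac{1}{r_3+1}$ and of order $n_1$ at $\lambda=\tfrac{r_1}{r_1+r_2}$ coming from the coronal and interaction factors exactly kills the spurious zeros carried by the $\mu_{n_2}=0$ and $\eta_{n_3}=0$ terms of the tensor-product factors, together with the routine but lengthy expansion needed to recognise the cleared polynomials as (\ref{eq-3.1-c}) and (\ref{eq-3.1-d}).
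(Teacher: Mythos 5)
Your proposal is correct and follows exactly the route the paper intends: the paper offers no explicit proof of this corollary beyond asserting that it follows from Theorem \ref{thm-3-2}(i), and your pole/zero bookkeeping (order $m_1$ at $\lambda=\tfrac{1}{r_3+1}$ and order $n_1$ at $\lambda=\tfrac{r_1}{r_1+r_2}$ cancelling the $\mu_{n_2}=0$ and $\eta_{n_3}=0$ factors), together with the denominator-clearing that yields (\ref{eq-3.1-c}) and (\ref{eq-3.1-d}) and the multiplicity tally against $|V(\mathcal{G})|$, supplies precisely the details the paper leaves implicit.
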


\begin{cor}\label{cor-3-2}
If $G_i$ is an $r_i$-regular graph with $n_i$ vertices and $m_i$ edges for $i=1,2,3$, the normalized Laplacian spectrum of $\mathfrak{G}$ consist of:
\begin{enumerate}
\item [\emph{(a)}] $\frac{2+r_2\mu_j}{r_2+2}$ repeated $m_1$ times for each eigenvalue $\mu_j$ of $\mathcal{L}(G_2)$, $j=1,2, \ldots,n_2-1$;
\item [\emph{(b)}] $\frac{1+r_3\eta_k}{r_3+1}$ repeated $n_1$ times for each eigenvalue $\eta_k$ of $\mathcal{L}(G_3)$, $k=1,2, \ldots,n_3-1$;
\item [\emph{(c)}] $1$ repeats $m_1-n_1$ times, $\frac{2}{r_2+2}$ repeats $m_1-n_1$ times;
\item [\emph{(d)}] four roots of the equation
\begin{equation*}
\begin{array}{rl}
&2(r_1n_2+n_3+r_1)((1+r_3)\lambda-1)((2+r_2)\lambda-2)(\lambda-1)^2-2n_3(\lambda-1)((2+r_2)\lambda-2)\\
&-r_1(2-\theta_i)\big((2n_2+r_2+2)\lambda-2n_2-2\big)((1+r_3)\lambda-1)=0,
\end{array}
\end{equation*}
 where each eigenvalue $\theta_i$ of $\mathcal{L}(G_1)$, $i=1,2,\ldots,n_1$.
\end{enumerate}
\end{cor}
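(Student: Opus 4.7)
The plan is to derive Corollary \ref{cor-3-2} as a direct consequence of Theorem \ref{thm-3-2}(ii), by reading eigenvalues off each factor of the displayed characteristic polynomial $\Phi_{\mathcal{L}(\mathfrak{G})}(\lambda)$ and by carefully tracking cancellations between the explicit product factors and the denominators hidden inside the rational-function factor indexed by $i$.

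First, since $G_2$ and $G_3$ are connected regular graphs, the eigenvalues $\mu_{n_2}=0$ and $\eta_{n_3}=0$ of $\mathcal{L}(G_2)$ and $\mathcal{L}(G_3)$ are simple. I would split each of the two explicit products in Theorem \ref{thm-3-2}(ii) into its \emph{nonzero part} (the ranges $j=1,\ldots,n_2-1$ and $k=1,\ldots,n_3-1$) and its \emph{zero part} ($j=n_2$, $k=n_3$). The nonzero parts are untouched by any cancellation and give exactly the eigenvalues listed in parts (a) and (b), with the stated multiplicities $m_1$ and $n_1$.

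Next, the zero parts contribute $(\lambda-\tfrac{2}{r_2+2})^{m_1}$ and $(\lambda-\tfrac{1}{r_3+1})^{n_1}$. The final factor in Theorem \ref{thm-3-2}(ii) is a product over $i=1,\ldots,n_1$ of rational functions whose only poles lie at $\lambda=\tfrac{1}{r_3+1}$ and $\lambda=\tfrac{2}{r_2+2}$. Clearing these denominators inside each $i$-factor turns it into the quartic polynomial displayed in part (d); this step absorbs $n_1$ copies of $(\lambda-\tfrac{1}{r_3+1})$ and $n_1$ copies of $(\lambda-\tfrac{2}{r_2+2})$. Consequently, $\tfrac{1}{r_3+1}$ disappears entirely from the spectrum, whereas $\tfrac{2}{r_2+2}$ survives with multiplicity $m_1-n_1$; together with the prefactor $(\lambda-1)^{m_1-n_1}$, this produces part (c).

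Finally, part (d) collects the $4n_1$ eigenvalues arising from the $n_1$ quartics obtained above, one quartic for each eigenvalue $\theta_i$ of $\mathcal{L}(G_1)$. The main obstacle I anticipate is the bookkeeping in the cancellation step: one must verify that, after clearing denominators, no spurious factor of $(\lambda-\tfrac{1}{r_3+1})$ or $(\lambda-\tfrac{2}{r_2+2})$ is left behind and that the resulting quartic matches the equation in part (d) up to a nonzero constant. As a sanity check I would sum the multiplicities,
\[
(n_2-1)m_1+(n_3-1)n_1+2(m_1-n_1)+4n_1=n_1+m_1+m_1n_2+n_1n_3=|V(\mathfrak{G})|,
\]
which confirms that the listed eigenvalues exhaust the normalized Laplacian spectrum of $\mathfrak{G}$.
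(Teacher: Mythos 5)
Your proposal is correct and follows the same route as the paper, which simply reads Corollary \ref{cor-3-2} off from Theorem \ref{thm-3-2}(ii); your cancellation bookkeeping (absorbing $n_1$ copies each of $(\lambda-\tfrac{1}{r_3+1})$ and $(\lambda-\tfrac{2}{r_2+2})$ into the cleared denominators of the $i$-indexed factors) and the multiplicity count are exactly the details the paper leaves implicit.
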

\begin{example}\label{examp-3-1}
Let $G=C_4^S\bowtie(K_2^V\cup K_{2}^E)$ and $H=C_4^S\diamondsuit(K_{2}^V\cup
K_{1}^E)$ (shown in Fig.\ref{fig-3.2}). By simple computation, $Spec_{\mathcal{L}}(C_{4})=\{0,1,1,2\}$, $Spec_{\mathcal{L}}(K_{2})=\{0,2\}$ and $Spec_{\mathcal{L}}(K_{1})=\{0\}$.

From Corollary \ref{cor-3-1}, the normalized Laplacian spectrum of $G$ consist of: $\frac{4}{3}$ (multiplicity 4), $\frac{3}{2}$ (multiplicity 4), four roots of the equation $24x^4-76x^3+71x^2-20x=0$, four roots (multiplicity 2) of the equation $48x^4-152x^3+156x^2-59x+6=0$, and four roots of the equation $48x^4-152x^3+170x^2-78x+12=0$.

From Corollary \ref{cor-3-2}, the normalized Laplacian spectrum of $H$ consist of: $\frac{4}{3}$ (multiplicity 4), four roots of the equation $42x^4-154x^3+176x^2-64x=0$, four roots (multiplicity 2) of the equation $42x^4-154x^3+190x^2-90x+12=0$, and four roots of the equation $42x^4-154x^3+204x^2-116x+24=0$.

\begin{figure}[H]
  \centering
  \subfigure{
    \includegraphics[width=3.8in]{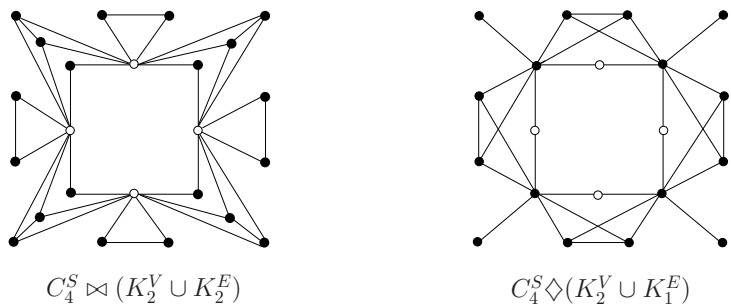}}
  \caption{$G=C_4^S\bowtie(K_2^V\cup K_{2}^E)$ and $H=C_4^S\diamondsuit(K_{2}^V\cup
K_{1}^E)$}\label{fig-3.2}
\end{figure}

\end{example}

\section{Applications}
In this section, we will give four distinct applications, such as construction for $\mathcal{L}$-cospectral graphs, computation for the number of \emph{spanning trees}, \emph{the multiplicative degree-Kirchhoff index} and \emph{Kemeny's constant} on SVEV-corona and SVEE-corona respectively.

\subsection {Construct $\mathcal{L}$-cospectral graphs}
In \cite{ER}, Dam and Haemers have proposed \emph{`Which graphs
are determined by their spectra?'}. The
question for the \emph{normalized Laplacian} spectrum is also one of the outstanding unsolved problems in the theory of graph spectra. Thus, if one wish to settle the question for graphs in general, it is natural to look
for constructing pairs of $\mathcal{L}$-cospectral graphs. In this section, we will respectively construct many infinite families of pairs of $\mathcal{L}$-cospectral graphs from \emph{SVEV-corona} and \emph{SVEE-corona}, which are generalized Theorem 2.7 due to Das and Panigrahi in \cite{Das}.

\begin{thm}\label{thm-4.1-1}
Let $G_{i}$ and $H_{i}$ (not necessarily distinct isomorphic) are pairwise $\mathcal{L}$-cospectral \textcolor[rgb]{0.00,0.00,1.00}{regular} graphs for $i=1,2,3$. Then
\begin{enumerate}
\item [\emph{(1)}] $G_1^S\bowtie(G_2^V\cup G_3^E)$ and $H_1^S\bowtie(H_2^V\cup H_3^E)$ are $\mathcal{L}$-cospectral graphs;
\item [\emph{(2)}] $G_1^S\diamondsuit(G_2^V\cup G_3^E)$ and $H_1^S\diamondsuit(H_2^V\cup H_3^E)$ are $\mathcal{L}$-cospectral graphs.
\end{enumerate}
\end{thm}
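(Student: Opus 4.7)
The plan is to read off both conclusions essentially for free from the factored closed forms of the normalized Laplacian characteristic polynomials supplied by Theorem \ref{thm-3-2}. Inspecting (i) and (ii) of that theorem, the expressions for $\Phi_{\mathcal{L}(\mathcal{G})}(\lambda)$ and $\Phi_{\mathcal{L}(\mathfrak{G})}(\lambda)$ depend on the three input graphs only through the numerical parameters $n_i,m_i,r_i$ ($i=1,2,3$) together with the $\mathcal{L}$-eigenvalues $\theta_i$ of $G_1$, $\mu_j$ of $G_2$ and $\eta_k$ of $G_3$. So my first step would be to note that, by hypothesis, the multisets $\{\theta_i\}$, $\{\mu_j\}$, $\{\eta_k\}$ are the same whether computed from the $G_i$'s or from the $H_i$'s.

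Next I would check that the remaining, purely combinatorial parameters $n_i,r_i,m_i$ are also common to each cospectral pair. Since $\mathcal{L}$-cospectral graphs have characteristic polynomials of equal degree, $n_i(G_i)=n_i(H_i)$. To recover $r_i$ from the $\mathcal{L}$-spectrum, I would use the fact that for an $r$-regular graph of order $n$ one has $\nu_i=r(1-\lambda_i)$ and $\sum_i \nu_i^2=2m=nr$, which rearranges to
\[
r \;=\; \frac{n}{\sum_{i=1}^{n}(1-\lambda_i)^{2}}.
\]
Thus $r_i$ is a function of $n_i$ and the $\mathcal{L}$-spectrum alone, so $r_i(G_i)=r_i(H_i)$ and consequently $m_i=n_ir_i/2$ matches as well.

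At this point every ingredient entering the factorizations in Theorem \ref{thm-3-2} agrees for the triples $(G_1,G_2,G_3)$ and $(H_1,H_2,H_3)$. For part (1) I would therefore conclude directly that
\[
\Phi_{\mathcal{L}(G_1^S\bowtie(G_2^V\cup G_3^E))}(\lambda)\;=\;\Phi_{\mathcal{L}(H_1^S\bowtie(H_2^V\cup H_3^E))}(\lambda),
\]
and for part (2) I would repeat the same line verbatim using formula (ii) of Theorem \ref{thm-3-2}.

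The only non-bookkeeping step, and the one I would write up most carefully, is the recovery of the common regularity $r_i$ from the shared $\mathcal{L}$-spectrum; once that is secured, the rest reduces to observing that Theorem \ref{thm-3-2} packages both characteristic polynomials entirely in terms of cospectral invariants of $G_1,G_2,G_3$. No further computation beyond Theorem \ref{thm-3-2} is needed.
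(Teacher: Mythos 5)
Your proposal is correct and follows essentially the same route as the paper: both conclude directly from Theorem \ref{thm-3-2} that the two characteristic polynomials depend on the input graphs only through $n_i$, $m_i$, $r_i$ and the $\mathcal{L}$-spectra of $G_1,G_2,G_3$, all of which are shared by the cospectral triples. Your additional verification that the regularity $r_i$ (and hence $m_i$) is itself recoverable from the $\mathcal{L}$-spectrum of a regular graph via $r=n/\sum_{i}(1-\lambda_i)^2$ is a detail the paper leaves implicit, and it correctly closes the only gap in the one-line argument.
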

\begin{proof}
From Theorem \ref{thm-3-2} we know that, the normalized Laplacian spectra of $G_1^S\bowtie(G_2^V\cup G_3^E)$ and $G_1^S\diamondsuit$ $(G_2^V\cup G_3^E)$ are completely determined by the degrees of regularities, the number of vertices, the number of edges and the normalized Laplacian spectra of regular graphs $G_i$ $(i=1,2,3)$. So the conclusions follows.
\end{proof}
\begin{remark}\label{remark-4.1-1}
Let $G_{1}$ and $H_{1}$ be two $\mathcal{L}$-cospectral graphs. If $\{G_{i}^{(1)},G_{i}^{(2)},G_{i}^{(3)},\ldots\}$ and $\{H_{i}^{(1)},H_{i}^{(2)},$ $H_{i}^{(3)},\ldots\}$ are two pairwise $\mathcal{L}$-cospectral
but not isomorphic graph sequences for $i=2,3$. We wonder what conditions such that the largest infinite families of pairs of $(G_1^S\bowtie(G_2^{(1)V}\cup G_3^{(1)E}))\bowtie(G_2^{(2)V}\cup G_3^{(2)E})\cdots$ and $(H_1^S\bowtie(H_2^{(1)V}\cup H_3^{(1)E}))\bowtie(H_2^{(2)V}\cup H_3^{(2)E})\cdots$,  and $(G_1^S\diamondsuit(G_2^{(1)V}\cup G_3^{(1)E}))\diamondsuit(G_2^{(2)V}\cup G_3^{(2)E})\cdots$ and $(H_1^S\diamondsuit(H_2^{(1)V}\cup H_3^{(1)E}))$ $\diamondsuit(H_2^{(2)V}\cup H_3^{(2)E})\cdots$ are $\mathcal{L}$-cospectral ?
\end{remark}
\begin{example}\label{examp-4.1-1}
Let $G_1$ and $H_1$ be two graphs shown in Fig.\ref{fig-2}. Then by Matlab 7.0 one can get $\Phi_{A(G_{1})}(x)=\Phi_{A(H_{1})}(x)=x^{14}- 21x^{12}-2x^{11}+164x^{10} +22x^{9}-599x^{8} -88x^{7}+1047x^{6}+168x^{5}
-800x^{4}-160x^{3}+216x^{2}+40x-12$.
It is easy to see that $G_1$ and $H_1$ are $A$-cospectral but not isomorphic with each other. Note that $\mathcal{L}=I-D^{-1/2}AD^{-1/2}$. And so, two graphs are $A$-cospectral imply that they are $\mathcal{L}$-cospectral. Consequently, it follows from Theorem \ref{thm-4.1-1} that $G_1^S\bowtie(K_3^V\cup K_2^E)$ and $H_1^S\bowtie(K_3^V\cup K_2^E)$ are $\mathcal{L}$-cospectral graphs, so are $G_1^S\diamondsuit(K_3^V\cup K_2^E)$ and $H_1^S\diamondsuit(K_3^V\cup K_2^E)$, see Fig.\ref{fig-3} and Fig.\ref{fig-4} for instance.
\end{example}
\begin{figure}[H]
  \centering
  \includegraphics[width=3.2in]{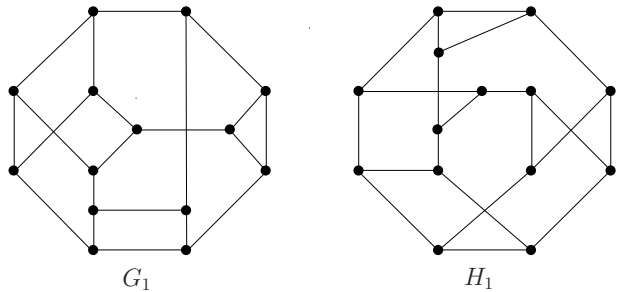}
  \caption{$G_1$ and $H_1$} \label{fig-2}
\end{figure}
\begin{figure}[H]
  \centering
  \subfigure{
    \includegraphics[width=2.0in]{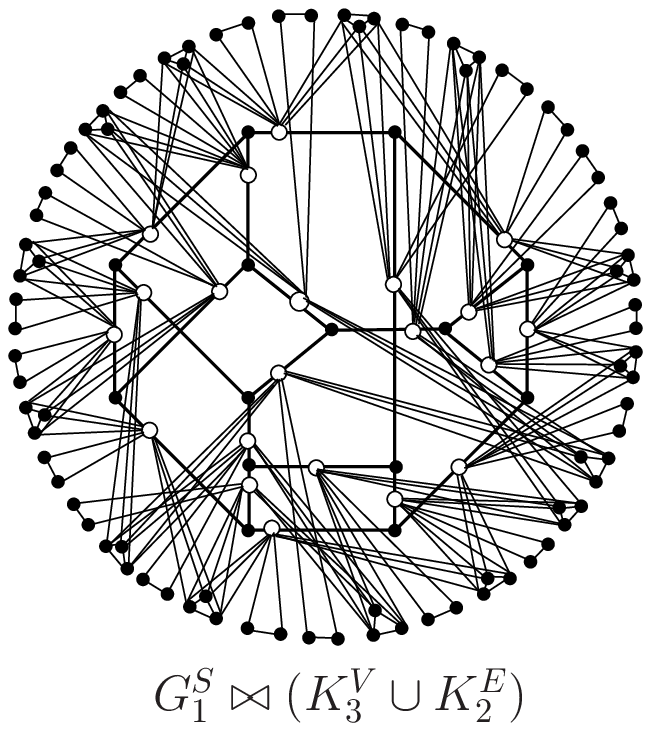}}
  \hspace{1in}
  \subfigure{
    \includegraphics[width=2.0in]{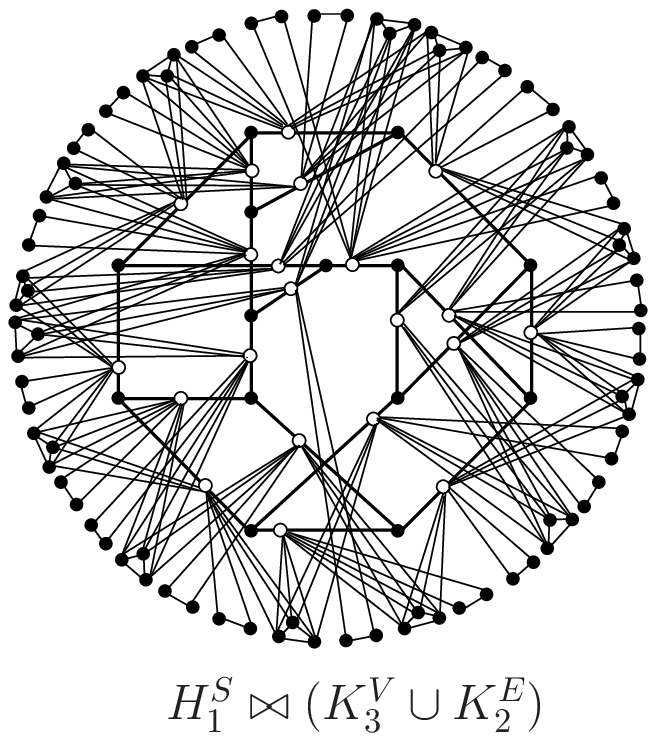}}
  \caption{$G_1^S\bowtie(K_3^V\cup K_2^E)$ and $H_1^S\bowtie(K_3^V\cup K_2^E)$}\label{fig-3}
\end{figure}
\begin{figure}[H]
  \centering
  \subfigure{
    \includegraphics[width=2.0in]{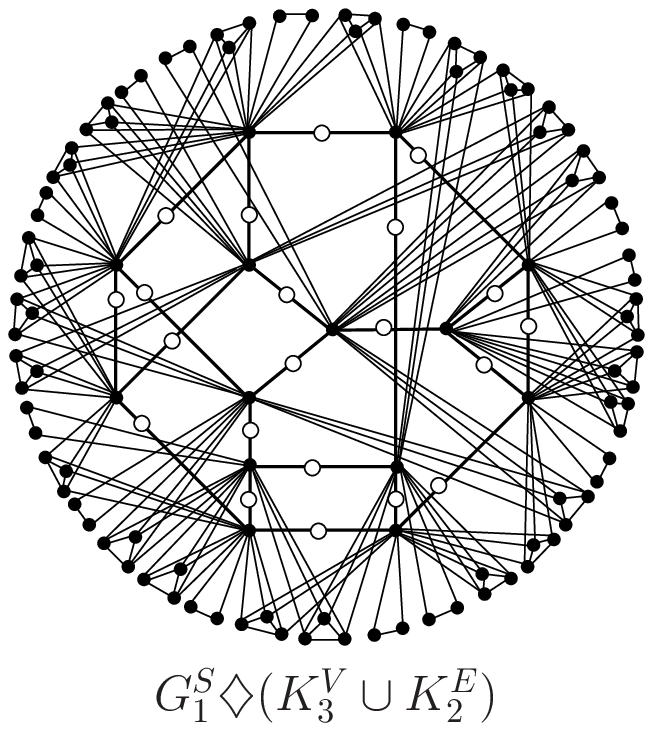}}
  \hspace{1in}
  \subfigure{
    \includegraphics[width=2.0in]{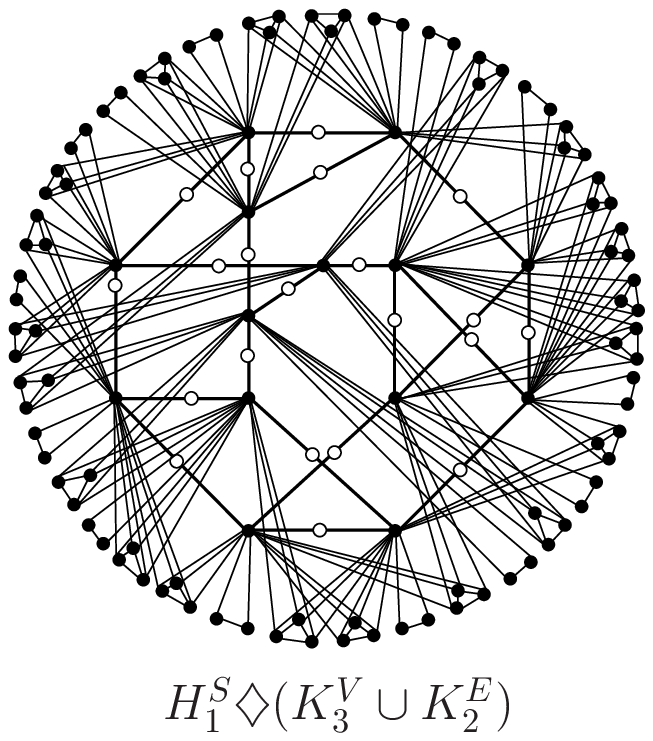}}
  \caption{$G_1^S\diamondsuit(K_3^V\cup K_2^E)$ and $H_1^S\diamondsuit(K_3^V\cup K_2^E)$}\label{fig-4}
\end{figure}

\subsection{The number of spanning trees}
Let $G$ be a connected graph of order $n$. A \emph{spanning tree} is a spanning subgraph of $G$ that is a tree. A known result from Chung \cite{Chung} allows the calculation of this number from the normalized Laplacian spectrum and the degrees of all the vertices, thus the number of spanning trees $\tau(G)$ of connected graph $G$ is
\begin{equation}\label{eq-4.1-1}
\hspace{4.8cm}\tau(G)=\displaystyle\frac{\Pi_{i=1}^{n}d_{i}
\Pi_{i=1}^{n-1}\lambda_{i}}{\sum_{i=1}^{n}d_{i}}.
\end{equation}
\begin{thm}\label{thm-4.2-1}
Let $G_i$ be an $r_i$-regular graph with $n_i$ vertices and $m_i$ edges for $i=1,2,3$. Then
\begin{equation*}
 \begin{array}{rl}
 \hspace{-1cm}(1)~~\tau(\mathcal{G})&= \prod\limits_{i=1}^{n_1-1}\!\theta_i\cdot\!\prod\limits_{j=1}^{n_2-1}\!(r_1\!+\!r_2\mu_j)^{n_1}\!\cdot\!\prod\limits_{k=1}^{n_3-1}\!(1\!+\!r_3\eta_k)^{m_1}\!\cdot\!r_1^{2n_1-1}\cdot(1+n_2)^{m_1-1}\\\vspace{0.2cm}
&\displaystyle\times 2^{m_1-n_1-1}\cdot\frac{n_3r_1r_3+4n_2r_1+2n_2r_2+2n_3r_1+4r_1}{2m_1+n_1m_2+m_1m_3+2m_1n_2+m_1n_3}.\\
 \end{array}
\end{equation*}
\begin{equation*}
 \begin{array}{rl}
 \hspace{-1cm}(2)~~\tau(\mathfrak{G})&=2^{2m_1-n_1-1}\cdot\!\prod\limits_{i=1}^{n_1-1}(r_1+r_1n_2)\theta_i\cdot\!\prod\limits_{j=1}^{n_2-1}\!(2\!+\!r_2\mu_j)^{m_1}
 \cdot\prod\limits_{k=1}^{n_3-1}(1\!+\!r_3\eta_k)^{n_1}\\
 &\displaystyle\times\frac{n_2r_1r_2+4n_2r_1+2n_3r_3+4n_3+4r_1}{2m_1+m_1m_2+n_1m_3+n_1n_3+2m_1n_2}.
\end{array}
\end{equation*}
\end{thm}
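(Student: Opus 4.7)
The plan is to apply Chung's formula (\ref{eq-4.1-1}), which reduces computing $\tau(\mathcal{G})$ to assembling three ingredients: the sum of degrees $\sum_v d_v = 2|E(\mathcal{G})| = 2(2m_1+n_1m_2+m_1m_3+2m_1n_2+m_1n_3)$, the product of degrees $\prod_v d_v$, and the product of the nonzero normalized Laplacian eigenvalues. The degree sequence recorded in the proof of Theorem~\ref{thm-3-1} yields directly $\prod_v d_v = r_1^{n_1}(2n_2+n_3+2)^{m_1}(r_1+r_2)^{n_1n_2}(r_3+1)^{m_1n_3}$. Part (2) for $\mathfrak{G}$ will follow by the same procedure using Remark~\ref{remark-3-1} and Corollary~\ref{cor-3-2}, so I focus on part (1).

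Next I partition the nonzero eigenvalues of $\mathcal{L}(\mathcal{G})$ according to the four groups of Corollary~\ref{cor-3-1}. Groups (a) and (b) contribute $\frac{r_1+r_2\mu_j}{r_1+r_2}$ and $\frac{1+r_3\eta_k}{r_3+1}$ with the listed multiplicities; none of these values can vanish since $\mu_j,\eta_k\in[0,2]$. For group (c), Vieta's formula applied to (\ref{eq-3.1-c}) gives the product of the two roots as $\frac{2n_2+2}{(r_3+1)(2n_2+n_3+2)}$, each occurring $m_1-n_1$ times.

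The delicate step is group (d). Evaluating the left-hand side of (\ref{eq-3.1-d}) at $\lambda=0$ collapses (after cancellation) to $r_1(1+n_2)\theta_i$, so $0$ is a root of (\ref{eq-3.1-d}) exactly when $\theta_i=0$; since $G_1$ is connected this happens only for $i=n_1$, which pins down the unique zero eigenvalue of $\mathcal{L}(\mathcal{G})$. For each $i<n_1$, Vieta yields
\[ \prod_{\text{roots of (\ref{eq-3.1-d})}}\lambda \;=\; \frac{r_1(1+n_2)\theta_i}{(r_1+r_2)(1+r_3)(2n_2+n_3+2)}. \]
For $i=n_1$, I factor out $\lambda$ and read off the product of the remaining three roots as $-(\text{coefficient of }\lambda)/(\text{leading coefficient})$. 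Extracting this coefficient is the main technical obstacle: expanding the three terms of (\ref{eq-3.1-d}) at $\theta_{n_1}=0$ and collecting powers of $r_1,r_2,r_3,n_2,n_3$ by hand gives, after cancellation, precisely $-(n_3r_1r_3+4n_2r_1+2n_2r_2+2n_3r_1+4r_1)$, so the three nonzero roots at $i=n_1$ multiply to $\frac{n_3r_1r_3+4n_2r_1+2n_2r_2+2n_3r_1+4r_1}{(r_1+r_2)(1+r_3)(2n_2+n_3+2)}$, exactly the expression sitting in the claimed numerator.

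Finally I substitute into (\ref{eq-4.1-1}) and simplify. The accumulated powers of $(r_1+r_2)$, $(r_3+1)$ and $(2n_2+n_3+2)$ in the denominators coming from groups (a)--(d) cancel exactly against the matching factors in $\prod_v d_v$; this is a bookkeeping check (for instance, $(r_1+r_2)$ collects exponent $n_1n_2-n_1(n_2-1)-(n_1-1)-1=0$). The factor $(2n_2+2)^{m_1-n_1}=2^{m_1-n_1}(n_2+1)^{m_1-n_1}$ combines with $(1+n_2)^{n_1-1}$ from group (d) and the $\tfrac12$ in $\tfrac{1}{2m}$ to give the stated $2^{m_1-n_1-1}(1+n_2)^{m_1-1}$, while the powers of $r_1$ collect into $r_1^{2n_1-1}$; this yields the claimed formula. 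Part (2) for $\mathfrak{G}$ is then obtained by the identical scheme applied to Corollary~\ref{cor-3-2}, with the only nontrivial step again being the constant-term and $\lambda$-coefficient extraction from the quartic in Corollary~\ref{cor-3-2}(d).
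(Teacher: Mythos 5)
Your proposal is correct and follows essentially the same route as the paper: apply Chung's formula, take the product of the nonzero eigenvalues group by group from Corollary \ref{cor-3-1} via Vieta's formulas, treat the $\theta_{n_1}=0$ case of the quartic separately by factoring out $\lambda$ and reading off the linear coefficient, and then cancel the powers of $(r_1+r_2)$, $(r_3+1)$ and $(2n_2+n_3+2)$ against the degree product. Your explicit justification that $0$ is a root of Eq.~(\ref{eq-3.1-d}) only when $\theta_i=0$ is a small point the paper leaves implicit, but the argument is otherwise identical.
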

\begin{proof}
The proof of (2) is similar to the proof of (1), it is here need to prove (1). We first consider the normalized Laplacian eigenvalues of $\mathcal{G}$ in the following way:

In Corollary \ref{cor-3-1} (c), one can by the well-known \emph{Vieta Theorem} obtain the relation of the two roots $\alpha_{1}$ and $\alpha_2$ of Eq.(\ref{eq-3.1-c}) such that
\begin{equation}\label{eq-4.2-1}
\alpha_1\alpha_2=\frac{2n_2+2}{(r_3+1)(2n_2+n_3+2)}.
\end{equation}
In Corollary \ref{cor-3-1} (d), let $\beta_1,\beta_2,\beta_3$ and $\beta_4$ be the four roots of Eq.(\ref{eq-3.1-d}) for each $\theta_{i},i=1,2,\ldots,n_{1}-1$. Then
\begin{equation}\label{eq-4.2-2}
\beta_1\beta_2\beta_3\beta_4=\frac{r_1(1+n_2)\theta_i}{(r_1+r_2)(1+r_3)(2n_2+n_3+2)}.
\end{equation}
For $i=n_{1}$, we notice that $\theta_{n_{1}}=0$. By Eq.(\ref{eq-3.1-d}) one can get
\begin{equation}\label{eq-4.2-3}
\begin{array}{rl}
&(2n_2r_1r_3+2n_2r_2r_3+n_3r_1r_3+n_3r_2r_3+2n_2r_1+2n_2r_2
+n_3r_1+n_3r_2+2r_1r_3\\
&+2r_2r_3+2r_1+2r_2)\lambda^4-(6n_2r_1r_3+4n_2r_2r_3+3n_3r_1r_3+2n_3r_2r_3
+8n_2r_1\\
&+6n_2r_2+4n_3r_1+3n_3r_2+6r_1r_3+4r_2r_3+8r_1+6r_2)\lambda^3+(4n_2r_1r_3+2n_2r_2r_3\\
&+3n_3r_1r_3+n_3r_2r_3+10n_2r_1+6n_2r_2+5n_3r_1+2n_3r_2+4r_1r_3+10r_1+4r_2)\lambda^2\\
&-(n_3r_1r_3+4n_2r_1+2n_2r_2+2n_3r_1+4r_1)\lambda=0.
\end{array}
\end{equation}
Suppose that $\gamma_1,\gamma_2$ and $\gamma_3$ are three non-zero roots of Eq.(\ref{eq-4.2-3}). Then by \emph{Vieta Theorem},
\begin{equation}\label{eq-4.2-4}
\gamma_1\gamma_2\gamma_3=\frac{n_3r_1r_3+4n_2r_1+2n_2r_2+2n_3r_1+4r_1}{(r_1+r_2)(1+r_3)(2n_2+n_3+2)}.
\end{equation}
In light of Corollary \ref{cor-3-1}, Eqs.(\ref{eq-4.2-1}), (\ref{eq-4.2-2}) and (\ref{eq-4.2-4}) we see that
\begin{equation*}
\begin{array}{rl}
\tau(\mathcal{G})\!\!\!\!\!&=\displaystyle\frac{\prod_{i=1}^{n}d_i\prod_{i=1}^{n-1}\lambda_i}{\sum_{i=1}^{n}d_i}\\
&=\frac{(r_1)^{n_1}(2n_2+n_3+2)^{m_1}(r_1+r_2)^{n_1n_2}(r_3+1)^{m_1n_3}}{2(2m_1+n_1m_2+m_1m_3+2m_1n_2+m_1n_3)}\prod\limits_{k=1}^{n_3-1}\!(\frac{1+r_3\eta_k}{r_3+1})^{m_1}\!\!\cdot\!\prod\limits_{i=1}^{n_1-1}\!\frac{r_1(1+n_2)\theta_i}{(r_1\!+\!r_2)(1\!+\!r_3)(2n_2\!+\!n_3\!+\!2)}\\
&\times\prod\limits_{j=1}^{n_2-1}\!(\frac{r_1+r_2\mu_j}{r_1+r_2})^{n_1}\!\!\cdot\!(\frac{2n_2+2}{(r_3+1)(2n_2\!+\!n_3\!+\!2)})^{m_1\!-\!n_1}\!\!\cdot\!\frac{n_3r_1r_3+4n_2r_1+2n_2r_2+2n_3r_1+4r_1}{(r_1+r_2)(r_3+1)(2n_2+n_3+2)}\\\vspace{0.2cm}
&=\displaystyle\prod\limits_{i=1}^{n_1-1}\!\theta_i\cdot\!\prod\limits_{j=1}^{n_2-1}\!(r_1\!+\!r_2\mu_j)^{n_1}\!\cdot\!\prod\limits_{k=1}^{n_3-1}\!(1\!+\!r_3\eta_k)^{m_1}\!\cdot\!r_1^{2n_1-1}\cdot(1+n_2)^{m_1-1}\cdot2^{m_1-n_1-1}\\\vspace{0.2cm}
&\displaystyle\times\frac{n_3r_1r_3+4n_2r_1+2n_2r_2+2n_3r_1+4r_1}{2m_1+n_1m_2+m_1m_3+2m_1n_2+m_1n_3},
\end{array}
\end{equation*}
as required.
\end{proof}

\begin{example}\label{examp-4.1-2}
Let $G=C_4^S\bowtie(K_2^V\cup K_2^E)$ and $H=C_4^S\diamondsuit(K_2^V\cup
K_1^E)$ (shown in Fig.\ref{fig-3.2}). It is easy to see that
$\prod_{i=1}^{n_1-1}\!\theta_i=2$, $\prod_{j=1}^{n_2-1}(r_1\!+\!r_2\mu_j)^{n_1}=4^4=2^8$,
$\prod_{k=1}^{n_3-1}(r_1\!+\!r_3\eta_k)^{m_1}\cdot r_{1}^{2n_1-1}=3^4\cdot 2^{7}$, $(1+n_2)^{m_1-1}\cdot2^{m_1-n_1-1}=3^3\cdot 2^{-1}$,
$n_3r_1r_3+4n_2r_1+2n_2r_2+2n_3r_1+4r_1=40$, $2m_1+n_1m_2+m_1m_3+2m_1n_2+m_1n_3=40$. Thus, by Theorem \ref{thm-4.2-1} (1) we get
$\tau(G)=2^{15}\cdot 3^{7}$. On the other hand, combining to the Example \ref{examp-3-1} and Eq.(\ref{eq-4.1-1}), one can easy to obtain that $\tau(G)=2^{15}\cdot 3^{7}$. Similarly, $\tau(H)=2^{15}\cdot 3^{3}$.
\end{example}

\subsection{The multiplicative degree-Kirchhoff index}
In \cite{Chen}, the \emph{multiplicative degree-Kirchhoff index} of $G$ is defined as
$$Kf^{*}(G)=\sum\limits_{i<j}d_{i}d_{j}r_{ij}$$
by Chen and Zhang, where $r_{ij}$ is the resistance between $i$ and $j$. This index is distinct the classical \emph{Kirchhoff index} $Kf(G)=\sum_{i<j}r_{ij}$ since it takes into account the degree distribution of $G$.
Meanwhile, they also have been proved that $Kf^{*}(G)$ can be obtained from the non-zero normalized Laplacian eigenvalues of $G$, i.e.,
\begin{equation}\label{eq-4.3-1}
\hspace{4.5cm}Kf^{*}(G)=2m\cdot\sum\limits_{i=1}^{n-1}\frac{1}{\lambda_{i}}.
\end{equation}

\begin{thm}\label{thm-4.3-1}
Let $G_i$ be an $r_i$-regular graph with $n_i$ vertices and $m_i$ edges for $i=1,2,3$. Then
\begin{equation*}
\begin{array}{rl}\vspace{0.2cm}
(1)~~K\!f^{*}(\mathcal{G})\!\!&=2(2m_1+n_1m_2+m_1m_3+2m_1n_2+m_1n_3)\big(\sum\limits_{j=1}^{n_2-1}\frac{n_1(r_1+r_2)}{r_1+r_2\mu_j}+\sum\limits_{k=1}^{n_3-1}\frac{m_1(r_3+1)}{1+r_3\eta_k}\\
&+\frac{(m_1\!-\!n_1)(r_3\!+\!2)(2n_2\!+\!n_3\!+\!2)}{2n_2\!+\!2}\!+\!\frac{(2n_2+n_3+2)(6r_1+3r_2+3r_1r_3+r_2r_3)-n_3(r_1+r_2)\!-\!2(r_3+1)(r_1+r_2+r_1n_2)}{(4r_1+r_2+r_1r_3)(2n_2+n_3+2)\!-\!(2r_1n_3+r_2n_3+2r_2)-(2+2n_2)(2r_1+r_1r_3)}\\
&+\sum\limits_{i=1}^{n_1-1}\frac{(4r_1+r_2+r_1r_3)(2n_2+n_3+2)-n_3(2r_1+r_2)-(2-\theta_i)\big((1+n_2)(2r_1+r_1r_3)+r_2\big)}{r_1(1+n_2)\theta_i}\big).
\end{array}
\end{equation*}
\begin{equation*}
\begin{array}{rl}\vspace{0.2cm}
\hspace{-0.85cm}(2)~~K\!f^*(\mathfrak{G})\!\!&=2(2m_1+m_1m_2+n_1m_3+n_1n_3+2m_1n_2)\big(\sum\limits_{j=1}^{n_2-1}\frac{m_1(r_2+2)}{2+r_2\mu_j}+\sum\limits_{k=1}^{n_3-1}\frac{n_1(r_3+1)}{1+r_3\eta_k}\\
&+\frac{(r_2+4)(m_1-n_1)}{2}+\frac{2(r_1n_2+r_1+n_3)(r_2r_3+6r_3+3r_2+12)-n_3(2r_2+4)-2r_1(r_3+1)(2n_2+r_2+2)}{2n_2r_1r_2+8n_2r_1+4n_3r_3+8n_3+8r_1}\\
&+\sum\limits_{i=1}^{n_1-1}\frac{2(r_1n_2+r_1+n_3)(2r_3+r_2+8)-(2r_2n_3+8n_3)-r_1(2-\theta_i)((2+2n_2)(2+r_3)+r_2)}{r_1(2+2n_2)\theta_i}\big).
\end{array}
\end{equation*}
\end{thm}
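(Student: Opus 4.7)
The plan is to apply the spectral formula Eq.(\ref{eq-4.3-1}), namely $Kf^*(G) = 2m \cdot \sum_{i=1}^{n-1} 1/\lambda_i$, to the explicit normalized Laplacian spectra furnished by Corollaries \ref{cor-3-1} and \ref{cor-3-2}. For the graph $\mathcal{G}$, the prefactor $2m$ equals $2(2m_1+n_1m_2+m_1m_3+2m_1n_2+m_1n_3)$, matching the overall multiplier in statement (1). The task then reduces to computing $\sum_i 1/\lambda_i$ over the non-zero eigenvalues by summing the reciprocals within each class listed in Corollary \ref{cor-3-1}.

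Classes (a) and (b) are immediate: each eigenvalue $\frac{r_1+r_2\mu_j}{r_1+r_2}$ with multiplicity $n_1$ contributes $\frac{n_1(r_1+r_2)}{r_1+r_2\mu_j}$ for $j=1,\ldots,n_2-1$, and each $\frac{1+r_3\eta_k}{r_3+1}$ with multiplicity $m_1$ contributes $\frac{m_1(r_3+1)}{1+r_3\eta_k}$. For class (c), the two roots $\alpha_1,\alpha_2$ of the quadratic Eq.(\ref{eq-3.1-c}) satisfy $1/\alpha_1+1/\alpha_2 = -b/c$ by Vieta's theorem; noting $2n_2r_3+n_3r_3+2r_3 = r_3(2n_2+n_3+2)$ and $4n_2+2n_3+4 = 2(2n_2+n_3+2)$, this simplifies to $\frac{(r_3+2)(2n_2+n_3+2)}{2n_2+2}$, and multiplying by the multiplicity $m_1-n_1$ yields the corresponding bracketed term of the formula.

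Class (d) is the heart of the computation. Substituting $\lambda=0$ into Eq.(\ref{eq-3.1-d}) gives the constant term $r_1(1+n_2)\theta_i$, so $\lambda=0$ is a root of the quartic if and only if $\theta_i=0$. For $i=1,\ldots,n_1-1$ all four roots are non-zero, and Vieta yields $\sum_{k=1}^{4} 1/\beta_k = -(\text{linear coefficient})/(\text{constant term})$; expanding the three products in Eq.(\ref{eq-3.1-d}) and collecting the coefficient of $\lambda$ produces the numerator $(4r_1+r_2+r_1r_3)(2n_2+n_3+2)-n_3(2r_1+r_2)-(2-\theta_i)((1+n_2)(2r_1+r_1r_3)+r_2)$, matching the $\sum_{i=1}^{n_1-1}$ summand of the formula. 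For the remaining $\theta_{n_1}=0$ the quartic factors as $\lambda \cdot (\text{cubic})$; the $\lambda=0$ root is the global zero eigenvalue of $\mathcal{L}(\mathcal{G})$ that must be excluded, and Vieta on the cubic gives the reciprocal sum of the three non-zero roots as $-a_2/a_1$, where $a_1,a_2$ are the coefficients of $\lambda$ and $\lambda^2$ in the original quartic at $\theta_i=0$. Extracting the $\lambda^2$-coefficient yields the stated numerator $(2n_2+n_3+2)(6r_1+3r_2+3r_1r_3+r_2r_3)-n_3(r_1+r_2)-2(r_3+1)(r_1+r_2+r_1n_2)$, while $-a_1$ simplifies to the denominator $(4r_1+r_2+r_1r_3)(2n_2+n_3+2)-(2r_1n_3+r_2n_3+2r_2)-(2+2n_2)(2r_1+r_1r_3)$.

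The main obstacle is the symbolic expansion of the $\lambda$- and $\lambda^2$-coefficients of Eq.(\ref{eq-3.1-d}): each requires collecting monomials in $r_1,r_2,r_3,n_2,n_3$ from three distinct products with several cancellations before the compact forms above emerge. Statement (2) for $\mathfrak{G}$ follows by applying the same four-step procedure to Corollary \ref{cor-3-2}; the notable simplification is that class (c) now contributes the two explicit eigenvalues $\{1,\frac{2}{r_2+2}\}$ each with multiplicity $m_1-n_1$, giving the closed-form term $\frac{(r_2+4)(m_1-n_1)}{2}$ in place of a Vieta expression, while class (d) is treated by the analogous quartic-plus-cubic argument with $\theta_{n_1}=0$ again producing the extra additive term in the formula.
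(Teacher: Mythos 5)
Your proposal is correct and follows essentially the same route as the paper: apply $Kf^*(G)=2m\sum_{i=1}^{n-1}1/\lambda_i$ to the spectrum classes of Corollaries \ref{cor-3-1} and \ref{cor-3-2}, use Vieta's relations on the quadratic of class (c) and the quartic of class (d), and handle $\theta_{n_1}=0$ separately by factoring out the zero root (the paper's Eq.(\ref{eq-4.2-3})) and applying Vieta to the resulting cubic. The only cosmetic difference is that you phrase the reciprocal sums as coefficient ratios $-a_1/a_0$ and $-a_2/a_1$ rather than as ratios of elementary symmetric functions of the roots, which is the same computation.
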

\begin{proof}
 From Eq.(\ref{eq-4.3-1}), $Kf^*(\mathcal{G})$ can be computed from the following way:

In Corollary \ref{cor-3-1} (c), let $\alpha_1$ and $\alpha_2$ be the two eigenvalues of equation (\ref{eq-3.1-c}). Then by \emph{Vieta Theorem}, we have
$$\frac{1}{\alpha_1}+\frac{1}{\alpha_2}=\frac{\alpha_1+\alpha_2}{\alpha_1\alpha_2}=\frac{(r_3+2)(2n_2+n_3+2)}{2n_2+2}.$$
In Corollary \ref{cor-3-1} (d), for each $\theta_{i}(i=2,3,\ldots,n_1)$, let $\beta_1$, $\beta_2$, $\beta_3$ and $\beta_4$ are the eigenvalues of Eq.(\ref{eq-3.1-d}). By \emph{Vieta Theorem}, we have
\begin{equation*}
\begin{array}{rl}\vspace{0.2cm}
\frac{1}{\beta_1}+\frac{1}{\beta_2}+\frac{1}{\beta_3}+\frac{1}{\beta_4}&=\frac{\beta_2\beta_3\beta_4+\beta_1\beta_3\beta_4+\beta_1\beta_2\beta_4+\beta_1\beta_2\beta_3}{\beta_1\beta_2\beta_3\beta_4}\\ &=\frac{(4r_1+r_2+r_1r_3)(2n_2+n_3+2)-n_3(2r_1+r_2)-(2-\theta_i)\big((1+n_2)(2r_1+r_1r_3)+r_2\big)}{r_1(1+n_2)\theta_i}.
\end{array}
\end{equation*}

Note that $\theta_{n_{1}}=0$. Then Eq.(\ref{eq-3.1-d}) equals to Eq.(\ref{eq-4.2-3}). Let $\gamma_1$, $\gamma_2$ and $\gamma_3$ be the non-zero eigenvalues of Eq.(\ref{eq-4.2-3}). Then
\begin{equation*}
\begin{array}{rl}\vspace{0.2cm}
\frac{1}{\gamma_1}+\frac{1}{\gamma_2}+\frac{1}{\gamma_3}&=\frac{\gamma_2\gamma_3+\gamma_1\gamma_3+\gamma_1\gamma_2}{\gamma_1\gamma_2\gamma_3}\\ &=\frac{(2n_2+n_3+2)(6r_1+3r_2+3r_1r_3+r_2r_3)-n_3(r_1+r_2)-2(r_3+1)(r_1+r_2+r_1n_2)}{(4r_1+r_2+r_1r_3)(2n_2+n_3+2)-(2r_1n_3+r_2n_3+2r_2)-(2+2n_2)(2r_1+r_1r_3)}\\
\end{array}
\end{equation*}

In summary above, the results of (1) follows. Similarly, (2) can be obtained.
\end{proof}
\begin{example}\label{examp-4.1-3}
For the graphs $G=C_4^S\bowtie(K_2^V\cup K_2^E)$ and $H=C_4^S\diamondsuit(K_2^V\cup
K_1^E)$ (shown in Fig.\ref{fig-3.2}), in light of Theorem \ref{thm-4.3-1}, $Kf^{*}(G)=\frac{2123\times 80}{60}=\frac{8492}{3}$. On the other hand, combining to the Example \ref{examp-3-1} and Eq.(\ref{eq-4.3-1}), one can easy to obtain that $Kf^{*}(G)=\frac{8492}{3}$. Similarly, $Kf^{*}(H)=\frac{307\times 64}{12}=\frac{4912}{3}$.
\end{example}
\subsection{Kemeny's constant}
For a graph $G$ , \emph{Kemeny's constant} $K(G)$, also known as average hitting time, is the expected number of steps required for the transition from a starting vertex $i$ to a destination vertex, which is chosen randomly according to a stationary distribution of unbiased random walks on $G$, see \cite{Hunter} for more details. From literature \cite{Butler} we know that
\begin{equation*}
\hspace{4.8cm} K(G)=\sum\limits_{i=1}^{n-1}\frac{1}{\lambda_{i}}.
\end{equation*}
Note that $K\!f^*(G)=2m\cdot K(G)$. Thus, the following result follows from Theorem \ref{thm-4.3-1} immediately.
\begin{thm}\label{thm-4.4-1}
Let $G_i$ be an $r_i$-regular graph with $n_i$ vertices and $m_i$ edges, where $i=1,2,3$. Then
\begin{equation*}
\begin{array}{rl}\vspace{0.2cm}
(1)~~K(\mathcal{G})\!\!&=\sum\limits_{j=1}^{n_2-1}\frac{n_1(r_1+r_2)}{r_1+r_2\mu_j}+\sum\limits_{i=1}^{n_1-1}\frac{(4r_1+r_2+r_1r_3)(2n_2+n_3+2)-n_3(2r_1+r_2)-(2-\theta_i)\big((1+n_2)(2r_1+r_1r_3)+r_2\big)}{r_1(1+n_2)\theta_i}\\
&+\frac{(m_1\!-\!n_1)(r_3\!+\!2)(2n_2\!+\!n_3\!+\!2)}{2n_2\!+\!2}\!+\!\frac{(2n_2+n_3+2)(6r_1+3r_2+3r_1r_3+r_2r_3)-n_3(r_1+r_2)\!-\!2(r_3+1)(r_1+r_2+r_1n_2)}{(4r_1+r_2+r_1r_3)(2n_2+n_3+2)\!-\!(2r_1n_3+r_2n_3+2r_2)-(2+2n_2)(2r_1+r_1r_3)}\\
&+\sum\limits_{k=1}^{n_3-1}\frac{m_1(r_3+1)}{1+r_3\eta_k}.\\
(2)~~K(\mathfrak{G})\!\!&=\sum\limits_{i=1}^{n_1-1}\frac{2(r_1n_2+r_1+n_3)(2r_3+r_2+8)-(2r_2n_3+8n_3)-r_1(2-\theta_i)((2+2n_2)(2+r_3)+r_2)}{r_1(2+2n_2)\theta_i}+\sum\limits_{j=1}^{n_2-1}\frac{m_1(r_2+2)}{2+r_2\mu_j}\\
&+\frac{(r_2+4)(m_1-n_1)}{2}+\frac{2(r_1n_2+r_1+n_3)(r_2r_3+6r_3+3r_2+12)-n_3(2r_2+4)-2r_1(r_3+1)(2n_2+r_2+2)}{2n_2r_1r_2+8n_2r_1+4n_3r_3+8n_3+8r_1}\\
&+\sum\limits_{k=1}^{n_3-1}\frac{n_1(r_3+1)}{1+r_3\eta_k}.
\end{array}
\end{equation*}
\end{thm}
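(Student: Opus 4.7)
The proof will be essentially a one-line corollary of Theorem~\ref{thm-4.3-1}, so the plan is to make the cancellation explicit. The key observation is the two identities already laid out in the paper: Chen and Zhang's formula $K\!f^{*}(G)=2m\sum_{i=1}^{n-1}1/\lambda_i$ together with the definition $K(G)=\sum_{i=1}^{n-1}1/\lambda_i$ immediately give $K(G)=K\!f^{*}(G)/(2m)$. So the plan is to divide each of the two formulas of Theorem~\ref{thm-4.3-1} by the corresponding quantity $2m$, where $m$ is read off from the defining edge counts $m(\mathcal{G})=2m_1+n_1m_2+m_1m_3+2m_1n_2+m_1n_3$ and $m(\mathfrak{G})=2m_1+m_1m_2+n_1m_3+n_1n_3+2m_1n_2$ recorded in Definitions~\ref{d-1} and \ref{d-2}.

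In more detail, for part~(1) I would start from
\[
K\!f^{*}(\mathcal{G})=2\bigl(2m_1+n_1m_2+m_1m_3+2m_1n_2+m_1n_3\bigr)\cdot S_{\mathcal{G}},
\]
where $S_{\mathcal{G}}$ denotes the bracketed sum of four pieces (a double sum over the eigenvalues $\mu_j$, a double sum over the $\eta_k$, the contribution $(m_1-n_1)(r_3+2)(2n_2+n_3+2)/(2n_2+2)$ coming from the quadratic in Corollary~\ref{cor-3-1}(c), the rational constant coming from the non-zero roots of the $\theta_{n_1}=0$ case of Corollary~\ref{cor-3-1}(d), and the sum over $i=1,\ldots,n_1-1$ coming from the quartic). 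Dividing by $2m(\mathcal{G})$ cancels the whole prefactor, leaving exactly $S_{\mathcal{G}}$, which is the claimed expression for $K(\mathcal{G})$. Part~(2) is obtained identically: divide $K\!f^{*}(\mathfrak{G})$ from Theorem~\ref{thm-4.3-1}(2) by $2m(\mathfrak{G})$.

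There is no real obstacle: the computations producing the four individual summands (the $n_2-1$ and $n_3-1$ sums from Corollary~\ref{cor-3-1}(a),(b), the closed-form $(r_3+2)(2n_2+n_3+2)/(2n_2+2)$ from the two roots of \eqref{eq-3.1-c}, the constant obtained by applying Vieta's theorem to the non-zero roots of \eqref{eq-4.2-3}, and the reciprocal-sum of the roots of \eqref{eq-3.1-d}) were already carried out inside the proof of Theorem~\ref{thm-4.3-1}. The only thing one needs to verify is that the common prefactor $2m$ in the multiplicative-Kirchhoff formula matches the number of edges dictated by the graph operation, which is exactly the content of the parenthetical factors in Theorem~\ref{thm-4.3-1}(1),(2). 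Hence, writing $K(G)=K\!f^{*}(G)/(2m)$ and substituting the formulas from Theorem~\ref{thm-4.3-1} yields the two claimed identities, completing the proof.
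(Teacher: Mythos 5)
Your proposal is correct and is exactly the paper's argument: the authors likewise observe that $K\!f^{*}(G)=2m\cdot K(G)$ and state that the theorem follows immediately from Theorem~\ref{thm-4.3-1} by cancelling the prefactor $2(2m_1+n_1m_2+m_1m_3+2m_1n_2+m_1n_3)$ (resp. $2(2m_1+m_1m_2+n_1m_3+n_1n_3+2m_1n_2)$), which equals $2m$ for $\mathcal{G}$ (resp. $\mathfrak{G}$). Nothing further is needed.
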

\begin{example}\label{examp-4.1-4}
For graphs $G=C_4^S\bowtie(K_2^V\cup K_2^E)$ and $H=C_4^S\diamondsuit(K_2^V\cup
K_1^E)$ (shown in Fig.\ref{fig-3.2}), according to Theorem \ref{thm-4.4-1}, one can get $K(G)=\frac{2123}{60}$, $K(H)=\frac{307}{12}$.
\end{example}

\end{document}